\documentclass[reqno,12pt]{amsart}
\usepackage{latexsym,epsfig,amssymb,amsmath,amsthm,color,url}
\usepackage{hyperref}
\usepackage{enumitem,pifont}

\allowdisplaybreaks \setlength{\textwidth}{6.5in}
\setlength{\textheight}{8.5in} \setlength{\topmargin}{0.25in}
\setlength{\headheight}{0in} \setlength{\oddsidemargin}{0in}
\setlength{\evensidemargin}{0in} \flushbottom
\pagestyle{myheadings} \numberwithin{equation}{section}
\newtheorem{thm}{Theorem}[section]
\newtheorem{lem}[thm]{Lemma}

\theoremstyle{definition}
\newtheorem{defn}[thm]{Definition}

\theoremstyle{remark}
\newtheorem{rem}[thm]{Remark}

\DeclareMathOperator{\no}{NO}
\DeclareMathOperator{\yes}{YES}

\DeclareMathOperator{\pr}{\mathbf{P}}
\DeclareMathOperator{\an}{an}

\DeclareMathOperator{\minimum}{Min}
\DeclareMathOperator{\DEHR}{DEHR}
\DeclareMathOperator{\EHR}{EHR}

\DeclareMathOperator{\variance}{Var}
\DeclareMathOperator{\Sup}{SUP}
\DeclareMathOperator{\pic}{PIC}
\DeclareMathOperator{\many}{MANY}
\DeclareMathOperator{\E}{\mathbf{E}}
\DeclareMathOperator{\bu}{BUSHY}

\title{First order theory on $G(n, c/n)$}
\author{Moumanti Podder}
\thanks{The author was partially supported by NSF CAREER under grant CCF:AF-1553354}
\address{Moumanti Podder, \ Georgia Institute of Technology, \ School of Mathematics, \ 686 Cherry St NW, Atlanta, GA 30313, United States.}
\email{mpodder3@math.gatech.edu.}

\begin{document}
\bibliographystyle{plainnat}
\nocite{*}

\begin{abstract}A well-known result of Shelah and Spencer tells us that the almost sure theory for first order language on the random graph sequence $\left\{G(n, cn^{-1})\right\}$ is not complete. This paper proposes and proves what the complete set of completions of the almost sure theory for $\left\{G(n, c n^{-1})\right\}$ should be. The almost sure theory $T$ consists of two sentence groups: the first states that all the components are trees or unicyclic components, and the second states that, given any $k \in \mathbb{N}$ and any finite tree $t$, there are at least $k$ components isomorphic to $t$. We define a $k$-completion of $T$ to be a first order property $A$, such that if $T + A$ holds for a graph, we can fully describe the first order sentences of quantifier depth $\leq k$ that hold for that graph. We show that a $k$-completion $A$ specifies the numbers, up to ``cutoff" $k$, of the (finitely many) unicyclic component types of given parameters (that only depend on $k$) that the graph contains. A complete set of $k$-completions is then the finite collection of all possible $k$-completions.


\end{abstract}

\subjclass[2010]{05C80, 60C05, 60F20, 03B10, 03C64}

\keywords{first order language, almost sure theory, complete set of completions, random graphs}

\maketitle

\section{Introduction}
\sloppy A well-known result of Shelah and Spencer \big(\cite[Theorem 6]{05}\big) states that the edge probability $p(n) = n^{-\alpha}$ in $G\big(n, p(n)\big)$ satisfies the zero-one law if and only if $\alpha$ is irrational. This means that when $\alpha$ is irrational, for every first order (FO) graph property $A$, the probability that $G\big(n, n^{-\alpha}\big)$ satisfies $A$ goes either to $0$ or to $1$ as $n$ goes to $\infty$. On the other hand, when $\alpha \in (0, 1]$ is rational, there exists some FO property whose probability under the measure induced by $G\big(n, n^{-\alpha}\big)$ approaches a limit in $(0, 1)$. As a result, the almost sure theory for FO logic with respect to $n^{-\alpha}$ is not complete when $\alpha$ is rational \big(these notions are explained in detail in Subsection \ref{FO_theory_model}\big). This work establishes the completion of the almost sure theory for FO language with respect to $p(n) = c n^{-1}$. The exact value of $c$ will be inconsequential, a fact that becomes evident from the main result. 
\par The two main results, Theorems~\ref{main_1} and \ref{main_2}, involve several notions that need to be introduced before the formal statements of the theorems can be explained. Before stating the theorems, we give a rough overview of their contents. The almost sure theory for FO sentences in $G\big(n, c n^{-1}\big)$ is described in Theorem~\ref{main_1}, and it comprises two groups of sentences. The first group asserts that any component of $G\big(n, c n^{-1}\big)$ will  be a tree or a unicyclic graph, i.e.\ there will be no component that is bicyclic or of higher graph-complexity. The second group, roughly speaking, comprises the following sentences: given any positive integer $k$, and any finite tree $t$, there will be at least $k$ tree-components that are of the same \emph{tree-type} as $t$ \big(see Definition \ref{tree_type} for tree-types\big). The completion of the almost sure theory for $G\big(n, c n^{-1}\big)$ is given in Theorem~\ref{main_2}, and a sentence $A$ in this group can be roughly described as follows. Given any positive integers $k$, $s$ and $m$, and any unicyclic graph $U$ with the length of its cycle $s$ and the trees originating from the vertices on its cycle of depth at most $m$, $A$ specifies the number (counted up to cutoff $k$) of unicyclic components that have the same \emph{cycle-type} as $U$ \big(see Definition \ref{unicyclic_type} for cycle-types\big). Let $\Sigma_{m, k}$ denote the set of all tree-types of depth $m$ and cutoff $k$, and $\Gamma_{s, m, k}$ the set of all cycle-types with cycle length $s$, maximum depth of trees $m$ and cutoff $k$. The formal statement of the theorem is as follows \big(throughout this paper, we shall denote by $\mathbb{N}_{s}$ the set of all non-negative integers that are at least $s$, for any non-negative integer $s$, and $\mathbb{N}_{1}$ is simply $\mathbb{N}$\big):

\begin{thm}\label{main_1}
Consider the theory $T$ consisting of the following sentences:
\begin{enumerate}
\item \label{1} For all $\ell \in \mathbb{N}$, the sentence $\no_{\ell}$: there does not exist any subset of $\ell$ vertices with $\ell+1$ edges.
\item \label{2} For each $m \in \mathbb{N}_{0}$, $k \in \mathbb{N}$ and tree-type $\sigma \in \Sigma_{m, k}$, the sentence $\yes_{\sigma}$: there exist at least $k$ tree components with $(m, k)$-type $\sigma$.
\end{enumerate}
Each sentence in $T$ holds almost surely for $p(n) = c n^{-1}$. Moreover, every first order sentence $B$ that holds almost surely for the edge probability sequence $p(n) = c n^{-1}$, can be derived from $T$.
\end{thm}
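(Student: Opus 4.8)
The plan is to split the argument into two halves: (a) every sentence listed in $T$ holds almost surely for $G(n, c/n)$, and (b) $T$ is "complete enough" to derive every first-order sentence that holds almost surely.

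For part (a), the sentences $\no_{\ell}$ are handled by a standard first-moment computation: the expected number of subgraphs on $\ell$ vertices spanning $\ell+1$ edges in $G(n,c/n)$ is $O(n^{\ell} \cdot n^{-(\ell+1)}) = O(n^{-1}) \to 0$, so almost surely no such dense subgraph exists; summing over the finitely many isomorphism types for each fixed $\ell$ changes nothing. For the sentences $\yes_{\sigma}$, I would fix a tree-type $\sigma \in \Sigma_{m,k}$ and a finite tree $t$ realizing it, and use the second-moment method (or a Poisson approximation via the method of moments) on the number $X_{n}$ of components of $G(n,c/n)$ that are isomorphic to $t$: one shows $\E[X_{n}] \to \lambda_{t} \in (0,\infty)$ and in fact $X_{n}$ converges in distribution to a Poisson random variable, hence $\pr(X_{n} \geq k) \to 1 - e^{-\lambda_t}\sum_{j<k}\lambda_t^j/j! $. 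Since this limit is in $(0,1)$ rather than $1$, the naive statement "at least $k$ copies of $t$" is \emph{not} almost sure — so the key point is that $\yes_{\sigma}$ must instead be phrased at the level of $(m,k)$-types, where the relevant count is the \emph{total} number of components whose $(m,k)$-type equals $\sigma$, aggregating over all trees $t$ of that type. A component's $(m,k)$-type records only truncated/bounded information, so infinitely many tree shapes $t$ feed into a single type $\sigma$, the aggregated count has expectation $+\infty$, and a Borel--Cantelli / disjointness-of-components argument then gives that almost surely there are at least $k$ such components. This is where I expect to lean on Definition~\ref{tree_type} and the precise combinatorics of types; getting the expectation to diverge for every $\sigma\in\Sigma_{m,k}$ is the crux of part (a).

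For part (b), completeness, I would argue by the standard logic-to-probability bridge. Let $B$ be a first-order sentence with $\pr\big(G(n,c/n)\models B\big) \to 1$. It suffices to show $T \vdash B$; equivalently, by completeness of first-order logic, that every model of $T$ satisfies $B$. So let $M$ be an arbitrary (countable, by Löwenheim--Skolem) model of $T$. The goal is to show $M \models B$. The mechanism is an Ehrenfeucht--Fra\"iss\'e (EF) game of depth $k := \mathrm{qd}(B)$: I claim that any two models of $T$ are $k$-equivalent, i.e., Duplicator wins the $k$-round EF game between them. Granting that claim, fix a large $n$ with $\pr\big(G(n,c/n)\models B\big)$ close to $1$ and with $G(n,c/n)$ also satisfying every sentence in $T$ of quantifier depth $\le k$ (there are only finitely many, so this has probability $\to 1$); then $G(n,c/n) \equiv_k M$, hence $M \models B$. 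The heart is therefore the EF claim, and its proof is a Duplicator strategy: the sentences $\no_\ell$ force every component of a model of $T$ to be a tree or unicyclic with bounded structure near its cycle, while the sentences $\yes_\sigma$ guarantee an unbounded supply of tree-components of each $(m,k)$-type; Duplicator maintains, after $j$ rounds, a partial isomorphism between the $(k-j)$-neighborhoods of the chosen vertices, using the abundance of each tree-type to answer any move in a "fresh" component. The main obstacle is verifying that the parameters $m$ and the cutoff $k$ built into $\Sigma_{m,k}$ are chosen large enough (as functions of $\mathrm{qd}(B)$) that Duplicator never runs out of matching material — i.e., that bounded local information really does determine $k$-equivalence. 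This is the technical core and I would expect it to occupy the bulk of the proof, with the tree-component case being routine and any unicyclic components (whose number is \emph{not} controlled by $T$, which is exactly why $T$ is incomplete and needs the $k$-completions of Theorem~\ref{main_2}) requiring care: here $B$ must not distinguish graphs differing only in unicyclic-component counts, which follows because $B$ holds a.s.\ and those counts, like the tree-counts, are Poisson in the limit and hence not a.s.\ determined.
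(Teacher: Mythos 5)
Your treatment of the sentences $\no_{\ell}$ matches the paper. However, there are two concrete errors in the rest of the proposal.

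First, the claim that the number $X_n$ of components isomorphic to a fixed finite tree $t$ satisfies $\E[X_n] \to \lambda_t \in (0,\infty)$ and is asymptotically Poisson is simply wrong. A tree on $\ell$ vertices has $\ell-1$ edges, so the probability a fixed $\ell$-set forms an isolated copy of $t$ is $\Theta\bigl(n^{-(\ell-1)} e^{-\ell c}\bigr) = \Theta(n^{-\ell+1})$, and summing over the $\Theta(n^{\ell})$ vertex sets gives expectation $\Theta(n)$, not $\Theta(1)$. (You have it confused with unicyclic components, which have $\ell$ edges on $\ell$ vertices and hence bounded expected count; those are genuinely Poisson and that is precisely why they drive the incompleteness of $T$.) Because each individual tree shape already has $\Theta(n)$ disjoint isolated copies whp, the paper's Lemma~\ref{stronger_yes} concludes by a direct second-moment/Chebyshev argument that $G(n,c/n)$ contains at least $k$ components isomorphic to $t$, almost surely. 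Your entire proposed detour — ``aggregate over the infinitely many tree shapes realizing a given $(m,k)$-type to force the expectation to diverge, then Borel--Cantelli'' — is built on the false premise and is unnecessary; the single-tree count already diverges.

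Second, the central claim in your part (b), that ``any two models of $T$ are $k$-equivalent,'' is false, and you effectively concede this yourself a few lines later by noting that $T$ does not control unicyclic-component counts. Two models of $T$ with different (bounded) counts of, say, triangle components are distinguished by a quantifier-depth-$O(1)$ sentence, so no Duplicator strategy can win between arbitrary models of $T$. The paper's argument is therefore structured differently: it first proves Theorem~\ref{main_3} that for each consistent vector $\vec{n} \in I_k$, the theory $T + \mathcal{A}_{\vec{n}}$ \emph{is} $k$-complete (the EF game is played between two models of $T + \mathcal{A}_{\vec{n}}$ for the \emph{same} $\vec{n}$, where the matching unicyclic-component counts make Duplicator's job feasible). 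It then notes that each $\mathcal{A}_{\vec{n}}$ has strictly positive limiting probability, so if $B$ holds a.s.\ it must hold in every model of $T + \mathcal{A}_{\vec{n}}$ for every $\vec{n}$, and since every model of $T$ satisfies $\mathcal{A}_{\vec{n}}$ for exactly one $\vec{n}$, every model of $T$ satisfies $B$. Your sketch gestures at the right idea (``$B$ must not distinguish graphs differing only in unicyclic-component counts''), but as written this is circular: you are using the a.s.\ nature of $B$ to justify an EF equivalence that your argument needs in order to deduce anything about a.s.\ $B$. The indirection through the completions $\mathcal{A}_{\vec{n}}$ and their positive limiting probabilities is what breaks the circle, and it cannot be elided.
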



\begin{thm}\label{main_2}
Consider the countably infinite index set $I$ consisting of all infinite sequences of the form
\begin{equation}\label{I}
\vec{n} = \Big(n_{\gamma}: \gamma \in \Gamma_{s, m, k},\ s \in \mathbb{N}_{3},\ m \in \mathbb{N}_{0},\ k \in \mathbb{N},\Big)
\end{equation}
where each $n_{\gamma} \in \{0, 1, \ldots, k\}$, and which are consistent (see \eqref{consistency_cond} for the definition of consistent sequences). For every $\vec{n} \in I$, consider the property $\mathcal{A}_{\vec{n}}$: for each $s \in \mathbb{N}_{3}$, $k \mathbb{N}$, $m \mathbb{N}_{0}$, and $\gamma \in \Gamma_{s, m, k}$, there exist exactly $n_{\gamma}$ many unicyclic components with $(s, m, k)$-type $\gamma$ when $n_{\gamma} < k$, and there exist at least $k$ unicyclic components with $(s, m, k)$-type $\gamma$ when $n_{\gamma} = k$. Then the family $\big\{\mathcal{A}_{\vec{n}}: \vec{n} \in I\big\}$ is a complete set of completions for $T$.
\end{thm}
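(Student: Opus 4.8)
The plan is to establish the three assertions bundled into the phrase ``complete set of completions'': (i) each $T + \mathcal{A}_{\vec n}$ with $\vec n \in I$ is consistent; (ii) each $T + \mathcal{A}_{\vec n}$ is complete; and (iii) every model of $T$ satisfies $\mathcal{A}_{\vec n}$ for exactly one $\vec n \in I$, so that the completions of $T$ are precisely the theories $T + \mathcal{A}_{\vec n}$. The main tools are Ehrenfeucht--Fra\"iss\'e games together with the apparatus already developed for Theorem~\ref{main_1} and Definitions~\ref{tree_type} and \ref{unicyclic_type}: under $T$ every component is a tree or a unicyclic graph; the $k$-equivalence class of a tree component is pinned down by its $(m_k,K_k)$-tree-type and that of a unicyclic component by its $(s,m_k,K_k)$-cycle-type, for parameters $m_k, K_k$ depending only on $k$; there are only finitely many $k$-equivalence classes of connected graphs; and, by a standard composition argument for disjoint unions, two disjoint unions of connected graphs are $k$-equivalent as soon as, for every $k$-equivalence class $\rho$ of connected graphs, the two sides carry the same number of components in $\rho$, counted up to a threshold $N(k)$ that depends only on $k$.

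For (i), fix a consistent $\vec n$ and take $G_{\vec n}$ to be the disjoint union of countably many copies of each finite tree together with, for every triple $(s,m,k)$ and every $\gamma \in \Gamma_{s,m,k}$, unicyclic components arranged so that exactly $n_\gamma$ of them have $(s,m,k)$-type $\gamma$ when $n_\gamma < k$ and infinitely many do when $n_\gamma = k$. Since a component contributes simultaneously to its $(s,m,k)$-type for all admissible $(m,k)$, these prescriptions are not independent; condition~\eqref{consistency_cond} is exactly what makes them jointly realizable by a single multiset of finite unicyclic graphs, so $G_{\vec n}$ is well defined. A disjoint union of trees and unicyclic graphs never contains $\ell$ vertices spanning $\ell+1$ edges, so every $\no_\ell$ holds; every $\yes_\sigma$ holds because $G_{\vec n}$ contains infinitely many copies of each finite tree; and $\mathcal{A}_{\vec n}$ holds by construction. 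Hence $G_{\vec n} \models T + \mathcal{A}_{\vec n}$.

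For (ii), let $G, H \models T + \mathcal{A}_{\vec n}$ and fix $k$; by the composition argument it suffices to match, up to $N(k)$, the number of components in each $k$-equivalence class $\rho$. If $\rho$ consists of trees, then the sentences $\yes_\sigma$ of $T$ force, in any model, at least $K_k$ components of each $(m_k,K_k)$-tree-type; choosing $K_k \ge N(k)$, both $G$ and $H$ carry at least $N(k)$ components in $\rho$, so the counts match. If $\rho$ consists of unicyclic graphs, then $\rho$ is a union of $(s,m_k,K)$-cycle-types (possibly infinitely many of them, since cycles of all sufficiently large lengths fall into one $k$-equivalence class), with cutoff $K \ge N(k)$; the property $\mathcal{A}_{\vec n}$ makes the number of components of each such cycle-type agree between $G$ and $H$ up to $K$, and a short case check then forces agreement up to $N(k)$ for the union $\rho$: if some summand already reaches $N(k)$, both totals do, and otherwise all summands agree exactly, hence so do the totals. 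Since $T$ leaves no components outside these two families, all $k$-equivalence-class counts agree at the threshold, Duplicator wins the $k$-round game, and letting $k$ range over all of $\mathbb{N}$ gives $G \equiv H$; thus $T + \mathcal{A}_{\vec n}$ is complete. I expect this step to carry the real weight: fixing $m_k$, $K_k$ and $N(k)$ explicitly, and invoking the unicyclic Ehrenfeucht--Fra\"iss\'e analysis of Definition~\ref{unicyclic_type} in the regime of unbounded cycle length, where infinitely many cycle-types collapse into one $k$-equivalence class and one must keep the ``up to threshold'' bookkeeping from being spoiled by infinitely many small summands.

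For (iii), given $G \models T$, set $n_\gamma := \min\bigl(k,\ \#\{\text{unicyclic components of }G\text{ of }(s,m,k)\text{-type }\gamma\}\bigr)$ for each $\gamma \in \Gamma_{s,m,k}$; being read off a genuine graph, $\vec n$ satisfies~\eqref{consistency_cond}, so $\vec n \in I$, and $G \models \mathcal{A}_{\vec n}$ by construction. Two distinct indices $\vec n \ne \vec m$ disagree in some coordinate $n_\gamma$, and the corresponding clauses of $\mathcal{A}_{\vec n}$ and $\mathcal{A}_{\vec m}$ (``exactly $i$'' versus ``exactly $j$'', or ``exactly $j < k$'' versus ``at least $k$'') are mutually contradictory, so no graph satisfies both. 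Finally, any complete consistent $\widehat T \supseteq T$ has a model $G$; that $G$ satisfies a unique $\mathcal{A}_{\vec n}$, whence $\widehat T \supseteq T + \mathcal{A}_{\vec n}$, and since $T + \mathcal{A}_{\vec n}$ is itself complete and consistent by (i) and (ii), $\widehat T = T + \mathcal{A}_{\vec n}$. Conversely each $T + \mathcal{A}_{\vec n}$ is a completion of $T$, so $\{\mathcal{A}_{\vec n} : \vec n \in I\}$ is a complete set of completions for $T$.
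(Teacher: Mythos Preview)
Your proposal has a genuine gap: you have misidentified what ``complete set of completions'' means in this paper. The definition in Subsection~\ref{FO_theory_model} requires three conditions: \ref{a} each $T+\mathcal{A}_{\vec n}$ is complete; \ref{b} $T \models \neg(\mathcal{A}_{\vec n}\wedge\mathcal{A}_{\vec m})$ for $\vec n\neq\vec m$; and \ref{c} for every $\vec n$, the limit $\lim_{n\to\infty}\pr[G(n,cn^{-1})\models\mathcal{A}_{\vec n}]$ exists and these limits sum to $1$. Your item~(iii) --- that every model of $T$ satisfies exactly one $\mathcal{A}_{\vec n}$ --- is a purely model-theoretic statement and is \emph{not} equivalent to~\ref{c}, which is a probabilistic assertion about $G(n,cn^{-1})$. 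The paper devotes all of Section~\ref{lim_probab_sec} to establishing~\ref{c}: one computes, for each fixed ``picture'' $H$ of the short unicyclic components, the limiting probability that $G(n,cn^{-1})$ realizes $H$, and then controls the tail over pictures with many vertices via Markov-type bounds on $\many_W$ and $\bu_W$. None of this is present in your proposal, so as written you have not proved the theorem as stated.

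Your approach to~(ii) is also genuinely different from the paper's, and in its current form is incomplete. You invoke a Feferman--Vaught style composition theorem for disjoint unions and then try to match component counts class-by-class. The paper instead gives a direct Duplicator strategy for $\EHR[G_1,G_2,k]$ (Section~\ref{proof}), with auxiliary vertices, the close/far dichotomy, and the local tool Lemma~\ref{same_tree_type_DEHR}; this is what produces the explicit bounds $2\cdot 3^{k+3}$ and $3^{k+3}$ in Theorem~\ref{main_3}. Your route is in principle viable, but your dichotomy ``$\rho$ consists of trees'' versus ``$\rho$ consists of unicyclic graphs'' is not clean: a unicyclic component with sufficiently long cycle is $k$-equivalent to a tree, so a $k$-equivalence class $\rho$ can contain both. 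You need to argue that if $\rho$ contains any tree then $\yes_\sigma$ already forces $\geq N(k)$ components in $\rho$, and that a $\rho$ containing no tree can only contain unicyclic graphs of \emph{bounded} cycle length --- this last fact is exactly what the paper's explicit strategy establishes, and you have asserted rather than proved it. Your ``short case check'' for the infinite union of cycle-types also needs the bounded-cycle-length fact to reduce to a finite sum.
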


\par We shall actually state and prove a stronger version of Theorem~\ref{main_2}: one that also shows that, given any positive integer $k$, the number of completions of the almost sure theory for FO sentences with quantifier depth at most $k$ is \emph{finite}. This version is given in Theorem~\ref{main_3}, where we only care about the counts of unicyclic components that have cycle length at most $2 \cdot 3^{k+3}$ and maximum depth of trees at most $3^{k+3}$.

\begin{thm}\label{main_3}
Fix a positive integer $k$. Consider the \emph{finite} index set $I_{k}$ consisting of all sequences of the form
\begin{equation}\label{I_truncated}
\vec{n} = \Big(n_{\gamma}: \gamma \in \Gamma_{s, m, k},\ s \in \{3, \ldots, 2 \cdot 3^{k+3}\},\ m \in \{0, \ldots, 3^{k+3}\}\Big)
\end{equation}
where $n_{\gamma} \in \{0, 1, \ldots, k\}$, and the sequence is consistent for every integer $3 \leq s \leq 3^{k+2}$, i.e.\ satisfies \eqref{consistency_cond_truncated} with $M_{1} = 2 \cdot 3^{k+3}$ and $M_{2} = 3^{k+3}$. For every $\vec{n} \in I_{k}$, we consider the property $\mathcal{A}_{\vec{n}}$: for all integers $3 \leq s \leq 2 \cdot 3^{k+3}$, $0 \leq m \leq 3^{k+3}$ and $\gamma \in \Gamma_{s, m, k}$, there exist exactly $n_{\gamma}$ many unicyclic components with $(s, m, k)$-type $\gamma$ when $n_{\gamma} < k$, and there exist at least $k$ unicyclic components with $(s, m, k)$-type $\gamma$ when $n_{\gamma} = k$. Then the family $\big\{\mathcal{A}_{\vec{n}}: \vec{n} \in I_{k}\big\}$ is a complete set of $k$-completions for $T$.
\end{thm}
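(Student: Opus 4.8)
The plan is to establish Theorem~\ref{main_3} via the Ehrenfeucht--Fra\"{\i}ss\'{e} (EF) game, using the classical fact that two graphs satisfy the same first order sentences of quantifier depth at most $k$ precisely when Duplicator wins the $k$-round EF game between them. Accordingly the argument splits into three parts. The core is the claim that any two graphs $G_1$ and $G_2$ that both satisfy $T + \mathcal{A}_{\vec{n}}$ for one and the same $\vec{n} \in I_k$ are $k$-equivalent; granting this (and the realizability of $T + \mathcal{A}_{\vec n}$ noted below), $T + \mathcal{A}_{\vec{n}}$ decides every depth-$\leq k$ sentence, so each $\mathcal{A}_{\vec{n}}$ is a $k$-completion. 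The second part is that the family partitions the models of $T$: the $\mathcal{A}_{\vec{n}}$ with $\vec{n} \in I_k$ are pairwise inconsistent, since two distinct sequences differ in some coordinate $n_{\gamma}$ and then make incompatible assertions about the number of unicyclic components of cycle-type $\gamma$; and every graph satisfying $T$ satisfies exactly one of them, namely the $\mathcal{A}_{\vec{n}}$ recording its true counts of unicyclic components of the relevant cycle-types, each capped at $k$ (that this sequence satisfies the consistency condition \eqref{consistency_cond_truncated} and hence lies in $I_k$ is verified directly, the capped sums behaving correctly because the uncapped relations hold on the nose for an actual graph). The third part is that $I_k$ is finite, being the set of $\{0, 1, \ldots, k\}$-valued functions on the finite index set $\bigcup\{\Gamma_{s,m,k} : 3 \leq s \leq 2 \cdot 3^{k+3},\ 0 \leq m \leq 3^{k+3}\}$.

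The heart of the proof is the core claim, which I would prove by exhibiting Duplicator's winning strategy in the $k$-round game on $G_1, G_2$. The quantitative backbone is the bound on the game's ``reach'': after $j$ of the $k$ rounds have been used, the neighborhoods that Duplicator must keep isomorphic around the pebbled vertices have radius at most $3^{k-j}$, so over all $k$ rounds nothing outside a ball of radius $3^{k+3}$ is ever constrained; this is exactly why a cycle of length more than $2 \cdot 3^{k+3}$ or a hanging tree of depth more than $3^{k+3}$ cannot be fully explored, and hence why only the truncated data recorded by $\vec{n}$ matters. Splitting $G_1$ and $G_2$ into tree-components and unicyclic components (legitimate because both satisfy the sentences $\no_{\ell}$ of $T$), Duplicator superposes three coherently-maintained strategies:
\begin{enumerate}
\item on tree-components, the tree game analysis from the earlier sections, exploiting that $T$ supplies, for every tree-type and every threshold, at least that many tree-components of that type in each of $G_1$ and $G_2$ -- far more than $k$ rounds can consume;
\item on ``large'' unicyclic components -- those with a cycle longer than $2 \cdot 3^{k+3}$, or with a hanging tree deeper than $3^{k+3}$ -- the observation that within the game's reach such a component is locally a rooted tree (a long path with trees attached), whose finite truncations are themselves tree-types, hence realized infinitely often in both graphs by part (1); so Spoiler's moves inside such a component are answered with genuine tree-components and (symmetrically) vice versa, and the number of such components is immaterial;
\item on ``small'' unicyclic components -- cycle length $s \leq 2 \cdot 3^{k+3}$ and all hanging trees of depth $\leq 3^{k+3}$ -- a partial matching of the small unicyclic components of $G_1$ with those of $G_2$: by $\mathcal{A}_{\vec{n}}$ the two graphs contain the same number, capped at $k$, of components of each cycle-type $\gamma \in \Gamma_{s,m,k}$ in range, which (since $k$ rounds distinguish only up to $k$ copies) suffices to match every component Spoiler could touch, and inside a matched pair Duplicator plays the per-component unicyclic strategy established earlier.
\end{enumerate}
The substance is in showing these three strategies fit together into a single winning strategy, whichever of the three regimes Spoiler enters on each move, and in checking that $\mathcal{A}_{\vec n}$ with $\vec n \in I_k$ is realizable -- for which one builds a model of $T$ with the prescribed small-unicyclic counts, the consistency condition \eqref{consistency_cond_truncated} guaranteeing that such a model exists.

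The step I expect to be the main obstacle is regime (2) together with the truncation claim: making rigorous that, for the $k$-round game, a unicyclic component with a very long cycle or a very deep hanging tree may be simulated by tree-components that $T$ provides, so that its presence neither helps nor hinders either player. This is precisely what licenses cutting $I_k$ down to the finitely many parameters $s \leq 2 \cdot 3^{k+3}$, $m \leq 3^{k+3}$ at the single granularity $k$, and it is where the clean reach bound $3^{k+3}$ -- not merely some unspecified function of $k$ -- has to be pulled out of the game analysis. With that in place, the partition and finiteness parts are routine, and assembling the three-part Duplicator strategy finishes the proof that each $T + \mathcal{A}_{\vec{n}}$, $\vec{n} \in I_k$, is a $k$-completion, so that $\{\mathcal{A}_{\vec{n}} : \vec{n} \in I_k\}$ is a complete set of $k$-completions.
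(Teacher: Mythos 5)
Your three-regime strategy for Duplicator (tree components; large unicyclic components treated locally as trees; matched small unicyclic components) is essentially the strategy the paper's Section~\ref{proof} carries out, with the ``fitting together'' made precise via auxiliary vertices $u_i, v_i$, the close/far and shallow/deep dichotomies, and the six maintained conditions; the local tool is the distance-preserving variant of the Ehrenfeucht game (Lemma~\ref{same_tree_type_DEHR}) on the truncated trees hanging off cycle vertices, not a separate ``unicyclic component game'' --- you appeal to ``the per-component unicyclic strategy established earlier'' but the paper has no such standalone lemma, and distance preservation is not optional: the close/far bookkeeping in \ref{cond 2} and the cluster-based inductive hypothesis in \ref{cond 5} only go through because Duplicator's responses preserve graph distances, which the standard EF game does not guarantee. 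Your reach bound $3^{k-j}$ after $j$ rounds is the right intuition; the paper implements it via \eqref{close_cond} and \eqref{deep_cond} and the depth parameters $3^{k+2-i}$, $3^{k+3-\beta}$ appearing in the conditions.

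The genuine gap is that you never address condition~\ref{c} of the definition of a complete set of $k$-completions: that for each $\vec{n} \in I_k$ the limit $\lim_{n\to\infty} \pr\big[G(n, cn^{-1}) \models \mathcal{A}_{\vec n}\big]$ exists and that these limits sum to $1$. Finiteness of $I_k$ plus the partition property does \emph{not} give this --- a priori each probability could oscillate. The paper devotes all of Section~\ref{lim_probab_sec} to this: it introduces the $(M_1, M_2)$-picture of a graph, computes the exact limiting probability of each fixed picture via Janson's inequality \eqref{eq_final_no_short_cycle} and \eqref{limit_each_H}, and then --- since $\mathcal{P}(\vec n)$ can be infinite --- controls the tail by bounding the probability of $\many_W$ (too many short cycles) and $\bu_W$ (a high-degree vertex near a short cycle) uniformly in $n$, to justify interchanging limit and sum. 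Your mention of ``realizability'' (that each $T + \mathcal{A}_{\vec n}$ has a model) is weaker than and does not substitute for the limiting-probability computation; without Section~\ref{lim_probab_sec}'s argument the proof of Theorem~\ref{main_3} is incomplete.
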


\subsection{First order language, theories and models}\label{FO_theory_model}
For this entire subsection, an excellent source to refer to is \cite[Chapters~1 and 3]{01}. First order language on graphs comprises sentences that capture \emph{local} properties of graphs. Formally, this language consists of the following components:
\begin{enumerate}
\item equality $(=)$ of vertices and adjacency $(\sim)$ of vertices;
\item variable symbols that are vertices in the graph, denoted by $x, y, z, \ldots$ etc.;
\item usual Boolean connectives such as $\vee, \wedge, \neg, \implies, \Leftrightarrow$ etc.;
\item quantifications: existential $(\exists)$ and universal $(\forall)$ that are \emph{only allowed over vertices}.
\end{enumerate}
A first order property of a graph is expressible as a sentence of finite length in this language. The \emph{quantifier depth} of an FO property is the minimum number of nested quantifiers needed to write it. Henceforth, prior to proving any result, we shall fix an arbitrary positive integer $k$, and all FO properties we then consider will have quantifier depth at most $k$.  
\par An FO property $A$ holds \emph{almost surely} with respect to the edge probability function $p(n)$ if
\begin{equation}\label{a.s. prop}
\lim_{n \rightarrow \infty} \pr\left[G\left(n, p_{n}\right) \models A\right] = 1,
\end{equation}
where $\models$ implies that the property $A$ holds. We say that $A$ holds \emph{almost never} if $\neg A$ holds almost surely. A \emph{theory} refers to a collection of sentences that are closed under logical inference in the FO language. We call a theory \emph{consistent} if it does not contain a contradiction, i.e.\ both a sentence and its negation are not present. We define the \emph{almost sure theory} $T$ relative to $p(n)$ to be the set of all FO properties that hold almost surely. We define a theory $T$ to be \emph{complete} if for every FO property $A$, either $A$ or $\neg A$ is in the theory. We define a theory $T$ to be \emph{$k$-complete}, for a given positive integer $k$, if for every FO sentence $A$ with quantifier depth at most $k$, either $A$ or $\neg A$ is in the theory. It is straightforward to see that the almost sure theory with respect to $p(n)$ is complete if and only if $p(n)$ satisfies the zero-one law. 
\par When $p(n)$ does not satisfy the zero-one law, in some cases it is still possible to ``nicely describe" a set of sentences that are not in the almost sure theory, and when appended to the almost sure theory, makes it complete. This is defined as follows \big(see \cite[Subsection 3.6.1]{01}\big). Suppose $T$ is the almost sure theory with respect to $p(n)$, and $I$ is a countable index set. Let $\mathcal{A}_{i}, i \in I$, be a set of sentences that are not in $T$. Then the family $\left\{\mathcal{A}_{i}: i \in I\right\}$ is said to be a \emph{complete set of completions} for $T$ if the following hold:
\begin{enumerate}[label=(\alph*)]
\item \label{a} for every $i \in I$, $T + \mathcal{A}_{i}$ is complete,
\item \label{b} for all $i, j \in I$, $T \models \neg\big(\mathcal{A}_{i} \wedge \mathcal{A}_{j}\big)$,
\item \label{c} for every $i \in I$, the limit $\displaystyle \lim_{n \rightarrow \infty} P\big[G(n, p(n)) \models \mathcal{A}_{i}\big]$ exists and the sum of these limits over $i \in I$ is $1$.
\end{enumerate}
We call $\left\{\mathcal{A}_{i}: i \in I\right\}$ a \emph{complete set of $k$-completions} for $T$ if for every $i \in I$, $T + \mathcal{A}_{i}$ is $k$-complete, and both \ref{b} and \ref{c} hold. 
\par A \emph{model} $G$ of a theory $T$ is a graph that satisfies all the sentences that are in $T$. G\"{o}del's completeness theorem \big(see \cite{04}\big) states that any consistent theory must have a finite or countable model. In our case, the model will be countably infinite. A theory is said to be \emph{$\aleph_{0}$-categorical} if it has precisely one countable model up to isomorphism. We call two graphs $G_{1}, G_{2}$ \emph{elementarily equivalent} if they satisfy the same set of FO properties. 
\par We prove elementary equivalence via the well-known tool of \emph{Ehrenfeucht games}. The Eherenfeucht games \big(see \cite[Chapter~2]{01}\big) serve as a bridge between mathematical logical properties and their structural descriptions on graphs (or even more general settings). Since this paper focuses on FO logic, we shall only be concerned with the standard \emph{pebble-move Ehrenfeucht games}. This game is played between two players, Spoiler and Duplicator, on two given graphs $G_{1}$ and $G_{2}$, for a given number of rounds $k$. In each round, Spoiler selects \emph{either} of the two graphs, and selects a vertex from that graph; in reply, Duplicator, in the same round, selects a vertex from the \emph{other} graph. Suppose $x_{i}$ is the vertex selected from $G_{1}$ and $y_{i}$ that from $G_{2}$ in round $i$, for $1 \leq i \leq k$. Duplicator wins, assuming optimal play by both players, if for all $i, j \in \{1, \ldots, k\}$, 
\begin{enumerate}[label={(EHR \arabic*)},leftmargin=*]
\item \label{EHR 1} $x_{i} = x_{j} \Leftrightarrow y_{i} = y_{j}$,
\item \label{EHR 2} $x_{i} \sim x_{j} \Leftrightarrow y_{i} \sim y_{j}$.
\end{enumerate}
\cite[Theorems~2.3.1 and 2.3.2]{01} give us the connection between FO logic on graphs and pebble-move Ehrenfeucht games. Duplicator wins the $k$-round Ehrenfeucht game $\EHR[G_{1}, G_{2}, k]$ if and only if $G_{1}$ and $G_{2}$ satisfy exactly the same FO sentences of quantifier depth at most $k$. To show that two countable models $G_{1}$ and $G_{2}$ for a theory in the FO language are elementarily equivalent, it therefore suffices to show that Duplicator wins $\EHR[G_{1}, G_{2}, k]$ for all $k$.

\subsection{Notations}\label{notation}
The root of a tree $T$ is generally denoted by $\phi$. For any vertex $v$ in $T$ that is not the root, let $\pi(v)$ denote its parent. For any vertex $v$ of $T$, let $d(v)$ denote its depth in $T$ (where we set $d(\phi) = 0$). Let $d(T)$ denote the maximum depth of $T$, i.e.\ $\sup\{d(v): v \in T\}$. Let $T(v)$ denote the subtree of $T$ consisting of $v$ and all its descendants in $T$. When $v$ is a child of the root $\phi$, we call $v$ a \emph{rootchild} and $T(v)$ a \emph{principal branch} of the tree. Let $T|_{n}$ denote the truncation of $T$ consisting of vertices at depth at most $n$. 
\par Consider a unicyclic graph $U$, where the vertices on the cycle are, say in anticlockwise order, named $v_{1}, \ldots, v_{s}$. We call these vertices \emph{cycle-vertices} of $U$. The exact order in which we enumerate them (as long as we enumerate them consecutively along the cycle) will cease to matter, as we shall see from Remark~\ref{cycle_convention}. Let $T_{i}$ be the tree rooted at $v_{i}$ for $1 \leq i \leq s$. We write $U = \left(v_{1}, \ldots, v_{s}; T_{1}, \ldots, T_{s}\right)$. For a vertex $v$ in $U$, if $v \in T_{i}$, then we call $v_{i}$ the \emph{cycle-ancestor} of $v$, and denote it by $\an(v)$ ($v_{i}$ is its own cycle-ancestor, for every $1 \leq i \leq s$). In other words, $\an(v)$ is the cycle-vertex of $U$ which is closest to $v$. For a vertex $v$ with $\an(v) = v_{i}$ for some $1 \leq i \leq s$, we define the depth $d(v)$ to be its depth with respect to the tree $T_{i}$. The truncation $U|_{n}$ of $U$ consists of all vertices with depth at most $n$, i.e.\ $U|_{n} = \left(v_{1}, \ldots, v_{s}; T_{1}|_{n}, \ldots, T_{s}|_{n}\right)$. By \emph{maximum depth} of $U$ we mean $\max\left\{d(T_{i}): 1 \leq i \leq s\right\}$, which could be infinite.
\par The graph distance is denoted by $\rho$ (thus $u \sim v \Leftrightarrow \rho(u, v) = 1$).

\section{Types of trees and unicyclic graphs}\label{types}
Here we shall define \emph{types} of trees and unicyclic graphs with respect to certain parameters. Once these parameters are fixed, there are only a \emph{finite} number of types into which all trees or all unicyclic graphs get classified. 
\begin{defn}[Tree types] \label{tree_type}
This definition requires two parameters: a \emph{cutoff} $k$ and a \emph{maximum depth} $m$ up to which a tree is considered. For a given tree $T$, its $(m, k)$-type is defined to be the same as the $(m, k)$-type of $T|_{m}$. Hence, it is enough to define $(m, k)$-types for trees of depth at most $m$.
\par We define the $(m, k)$-type of a tree recursively, where the recursion happens on depth. The only $(0, k)$-type is the root itself. Suppose we have defined all possible $(m-1, k)$-types, and let $\Sigma_{m-1,k}$ denote the set of all these types. Given a tree $T$ of depth at most $m$, let $n_{\sigma}$ be the number of principal branches with $(m-1, k)$-type $\sigma$, for every $\sigma \in \Sigma_{m-1, k}$. Then the $(m, k)$-type of $T$ is given by the vector
\begin{equation}
\big(n_{\sigma} \wedge k: \sigma \in \Sigma_{m-1, k}\big).
\end{equation}
\end{defn}
Notice that we are truncating the count for each $(m-1,k)$-type at $k$. This is the reason why we have only finitely many $(m, k)$-types, i.e.\ $\Sigma_{m, k}$ is finite.

\par Before we define types for unicyclic graphs, we set down a convention that will help us define types so that they are the same up to dihedral automorphisms. Consider a totally ordered finite set $(S, \prec)$. A \emph{word} from $S$ is a finite ordered sequence of elements from $S$. Consider a word $\left(a_{1}, \ldots, a_{s}\right)$ from $S$. Let $D_{s}$ be the dihedral permutation group of order $2s$. For any word $(a_{1}, \ldots, a_{s})$ from $S$, we set 
\begin{equation}\label{dihedral}
D_{s}\big(\left(a_{1}, a_{2}, \ldots, a_{s}\right)\big) = \big\{\left(a_{\pi(1)}, a_{\pi(2)}, \ldots, a_{\pi(s)}\right): \pi \in D_{s}\big\}.
\end{equation}
\par Let $x \preceq y$ indicate that either $x \prec y$ or $x = y$. We now define a total ordering $\prec_{S}$ on $S^{s}$ for every $s \in \mathbb{N}$. For two words $\left(x_{1}, \ldots, x_{s}\right)$ and $\left(y_{1}, \ldots, y_{s}\right)$, if $x_{t} \preceq y_{t}$ for all $1 \leq t \leq s$, with at least one $t_{0}$ such that $x_{t_{0}} \prec y_{t_{0}}$, we define $\left(x_{1}, \ldots, x_{s}\right) \prec_{s} \left(y_{1}, \ldots, y_{s}\right)$. We denote by $\minimum\big(\left(a_{1}, \ldots, a_{s}\right)\big)$ the \emph{minimal element} in $D_{s}\big(\left(a_{1}, a_{2}, \ldots, a_{s}\right)\big)$ under the total ordering $\prec_{s}$. 

\par By Definition \ref{tree_type}, $\Sigma_{m-1, k} \subseteq \Sigma_{m, k}$. We choose a total ordering $\prec_{m}$ on $\Sigma_{m, k}$ such that $\prec_{m}$ restricted to $\Sigma_{m-1, k}$ agrees with $\prec_{m-1}$, and for \emph{all} $\sigma_{1} \in \Sigma_{m-1, k}$ and $\sigma_{2} \in \Sigma_{m, k} \setminus \Sigma_{m-1, k}$, we have $\sigma_{1} \prec_{m} \sigma_{2}$. Such a total ordering is non-unique, but we fix an arbitrary choice for the rest of this paper.

\begin{defn}[Unicyclic graph types]\label{unicyclic_type}
This definition requires three parameters: cutoff $k$, maximum depth $m$ and cycle-length $s$; we also need the total ordering $\prec_{s}$ fixed above. Given uncyclic graph $U = \left(v_{1}, \ldots, v_{s}; T_{1}, \ldots, T_{s}\right)$, let $\sigma_{i}$ be the $(m, k)$-type of $T_{i}$ for $1 \leq i \leq s$. Consider the length-$s$ word $\left(\sigma_{1}, \ldots, \sigma_{s}\right)$ from $\Sigma_{m, k}$. Then the $(s, m, k)$-type of $U$ is given by $\minimum\big(\left(\sigma_{1}, \ldots, \sigma_{s}\right)\big)$ defined under $\prec_{s}$. We shall denote the set of all possible $(s, m, k)$-types by $\Gamma_{s, m, k}$. 
\end{defn}

\begin{rem}\label{cycle_convention}
Henceforth, we shall always maintain the following convention while describing any unicyclic graph $U$. Suppose $U$ has cycle length $s$, and fix any integers $m \in \mathbb{N}_{0}$ and $k \in \mathbb{N}$. We enumerate the cycle-vertices of $U$ (again, consecutively along the cycle) as $v_{1}, \ldots, v_{s}$ such that, if $\sigma_{i}$ is the $(m, k)$-type of the tree $T_{i}$ rooted at $v_{i}$, for $1 \leq i \leq s$, then $(\sigma_{1}, \ldots, \sigma_{s})$ is the $(s, m, k)$-type of $U$ (i.e.\ $(\sigma_{1}, \ldots, \sigma_{m})$ is the minimum, under $\prec_{s}$, out of the dihedral permutations of the cycle-vertices of $U$).
\end{rem}

We now define what it means for a sequence of the form given in \eqref{I}, or in \eqref{I_truncated}, to be consistent. Consider arbitrary but fixed $s \in \mathbb{N}_{3}$ and $k \in \mathbb{N}$. For any $m \in \mathbb{N}_{0}$, $\gamma \in \Gamma_{s, m, k}$ and $i \geq m$, set
\begin{equation}\label{sup}
\Sup(\gamma) = \left\{\sigma \in \Gamma_{s, m+1, k}: \text{ the } (s, m, k)-\text{type of } \sigma \text{ is } \gamma\right\}.
\end{equation}
The type $\gamma$ itself belongs to $\Sup(\gamma)$ since $\Gamma_{s, m, k} \subseteq \Gamma_{s, m+1, k}$. We now define the consistency condition required in \eqref{I} as follows:
\begin{equation}\label{consistency_cond}
n_{\gamma} = \left\{\sum_{\sigma \in \Sup(\gamma)} n_{\sigma}\right\} \wedge k \text{ for all } m \in \mathbb{N}_{0},\ s \in \mathbb{N}_{3},\ \gamma \in \Gamma_{s, m, k}.
\end{equation} 
Fix positive integers $M_{1}$ and $M_{2}$, and vector $\vec{n} = \left(n_{\gamma}: \gamma \in \Gamma_{s, m, k}, 3 \leq s \leq M_{1}, 0 \leq m \leq M_{2}\right)$, where each $n_{\gamma} \in \{0, 1, \ldots, k\}$. We say that this vector is consistent if we have
\begin{equation}\label{consistency_cond_truncated}
n_{\gamma} = \left\{\sum_{\sigma \in \Sup(\gamma)} n_{\sigma}\right\} \wedge k \text{ for all } 0 \leq m \leq M_{2},\ 3 \leq s \leq M_{1},\ \gamma \in \Gamma_{s, m, k} .
\end{equation}

\section{The almost sure theory with respect to $p(n) = c n^{-1}$}\label{schema}
Having defined types, we are in a position to explicitly describe the groups of sentences given in Theorems~\ref{main_1} and \ref{main_2}. We show here that each sentence in groups \ref{1} and \ref{2} holds almost surely with respect to $p(n) = c n^{-1}$. 

\par For $\ell \in \mathbb{N}$, recall that $\no_{\ell}$ is the property that there exists no subset of $\ell$ vertices with $\ell+1$ edges. For any subset $S$ of $\ell$ vertices in $G(n, cn^{-1})$, let $X_{S}$ be the indicator random variable for the event that $S$ has $\ell+1$ edges. Then 
$$E[X_{S}] = {{\ell \choose 2} \choose \ell+1} \left\{c n^{-1}\right\}^{\ell+1} = \Theta\left(n^{-\ell-1}\right).$$
There are ${n \choose \ell} = \Theta\left(n^{\ell}\right)$ many such subsets $S$, hence the expected number of subsets with $\ell$ vertices and $\ell+1$ edges, is $\Theta\left(n^{\ell} \cdot n^{-\ell-1}\right) = \Theta\left(n^{-1}\right)$. A direct application of Chebychev's inequality now shows that $\no_{\ell}$ holds almost surely.


\par Now fix $m \in \mathbb{N}_{0}$ and $k \in \mathbb{N}$. We prove a result stronger than $\yes_{\sigma}$, for any $\sigma \in \Sigma_{m, k}$. 
\begin{lem}\label{stronger_yes}
Given any finite tree $t$ and any positive integer $k$, a countable model of the theory $T$, as given in Theorem~\ref{main_1}, will contain at least $k$ tree components each of which is isomorphic to $t$.
\end{lem}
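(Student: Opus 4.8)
The plan is to obtain all $k$ copies of $t$ simultaneously from a \emph{single} one of the $\yes$-sentences of $T$, chosen one level of depth beyond $t$ itself and with a generous cutoff. Write $D := d(t)$ for the maximum depth of $t$ under the rooting used in Definition~\ref{tree_type}, set $N := |V(t)|$, and fix the cutoff $k_{0} := \max\{k,\, N+1\}$. Let $\sigma \in \Sigma_{D+1,\,k_{0}}$ be the $(D+1,k_{0})$-type of $t$. Since $G$ is a model of $T$, it satisfies the sentence $\yes_{\sigma}$ (the instance of group~(2) of Theorem~\ref{main_1} with parameters $m = D+1$ and cutoff $k_{0}$), so $G$ has at least $k_{0} \geq k$ tree components of $(D+1,k_{0})$-type $\sigma$. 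The lemma then follows once one shows that \emph{every} tree component $C$ of $G$ with $(D+1,k_{0})$-type $\sigma$ is isomorphic to $t$.

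For this I would first record a \emph{rigidity} fact: if $C$ is any rooted tree whose $(D+1,k_{0})$-type equals that of $t$, then $C|_{D+1} \cong t$. This is an easy induction on depth. Because $t = t|_{D+1}$ has fewer than $k_{0}$ vertices, none of the principal-branch counts recorded in its $(D+1,k_{0})$-type is truncated at $k_{0}$; hence equality of types forces the multiset of $(D,k_{0})$-types of the principal branches of $C|_{D+1}$ to coincide \emph{exactly} with that of $t$ (in particular $C|_{D+1}$ has only finitely many branches, each again with fewer than $k_{0}$ vertices), and the induction hypothesis upgrades each matched pair of branch-types to an isomorphism of the corresponding branches. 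Granting this, the proof finishes with the ``one extra level'' observation: since $d(t) = D$, the tree $t$ has no vertex at depth $D+1$, so neither does $C|_{D+1} \cong t$; but $C$ is connected, so a vertex of $C$ at depth $\geq D+1$ would force a vertex at depth exactly $D+1$. Hence $C$ has no vertex at depth $>D$, so $C = C|_{D} = (C|_{D+1})|_{D} \cong t|_{D} = t$. Applying this to each of the $\geq k$ components furnished by $\yes_{\sigma}$ yields at least $k$ components isomorphic to $t$. (The same computation shows such a component is automatically finite, so nothing depends on how one roots the tree components of $G$, nor on the value of $c$.)

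The only point requiring real care is the rigidity fact together with its surrounding conventions: making precise how the $(m,k)$-type of a \emph{component} of $G$ is computed, and checking that, with the cutoff chosen safely above $|V(t)|$, the $(D+1,k_{0})$-type of a tree of depth $\leq D+1$ determines it up to isomorphism. The remaining ``one extra level'' step is the actual idea and is immediate once rigidity is available --- note that counting copies of the depth-$D$ truncation alone would not suffice, since, e.g., a single edge and a path of length two share every $(1,k)$-type, whereas passing to depth $D+1$ exactly detects that the leaves of $t$ really are leaves.
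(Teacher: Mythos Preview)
Your argument is correct and takes a genuinely different route from the paper. The paper argues probabilistically on $G(n,cn^{-1})$: letting $Y_S$ be the indicator that an $\ell$-set $S$ spans an isolated copy of $t$, it computes $\E\bigl[\sum_S Y_S\bigr]=\Theta(n)$ and $\variance\bigl[\sum_S Y_S\bigr]=\Theta(n)$, and concludes via the second-moment method that $G(n,cn^{-1})$ almost surely contains $\Theta(n)$ tree components isomorphic to $t$. Your proof is instead purely deductive from the axioms of $T$: with cutoff $k_0>|V(t)|$ no principal-branch count in the $(D{+}1,k_0)$-type $\sigma$ of $t$ is truncated, so having type $\sigma$ pins down the tree up to depth $D{+}1$ exactly, and the extra level then forces any component of that type to coincide with $t$; the axiom $\yes_\sigma\in T$ now hands you the $k$ copies. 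The paper's computation is tailored to showing that each $\yes_\sigma$ holds almost surely (the stated goal of that section), whereas your argument is tailored to the lemma exactly as phrased, deriving the conclusion for an \emph{arbitrary} countable model of $T$ directly from its axioms. One small point worth making explicit, which you already flag: the convention under which the $(m,k)$-type of an unrooted tree \emph{component} is read off --- any reasonable choice works here, since your rigidity step shows such a component is finite and isomorphic to $t$ regardless of the rooting.
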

\begin{proof}
Fix \emph{any} finite tree $t$. Suppose $t$ has $\ell$ vertices (hence $\ell-1$ many edges), and $a$ automorphisms. Consider any subset $S$ of $\ell$ vertices. Let $Y_{S}$ be indicator for the event that the subgraph on $S$ is a component by itself and isomorphic to $t$. The number of distinct graphs on $S$ that are isomorphic to $t$, up to automorphism, is $\ell! a^{-1}$. For $S$ to be a component by itself, there can be no edge between $S$ and $S^{c}$. Hence
\begin{align}\label{a.s. eq 1}
E\left[Y_{S}\right] &= \frac{\ell!}{a} \left(\frac{c}{n}\right)^{\ell - 1} \left(1-\frac{c}{n}\right)^{\ell(n - \ell) + {\ell \choose 2} - \ell + 1} \nonumber\\
&= \Theta\left\{n^{-\ell+1} \left(1 - \frac{c}{n}\right)^{\ell n}\right\} = \Theta\left\{n^{-\ell+1} e^{-\ell c}\right\} = \Theta\left(n^{-\ell+1}\right).
\end{align} 
There are ${n \choose \ell} = \Theta\left(n^{\ell}\right)$ many choices for the subset $S$. Hence the expected number of such tree components is $\Theta\left(n^{\ell} \cdot n^{-\ell+1}\right) = \Theta(n)$.
Crucially, note that if $S_{1}$ and $S_{2}$ are two subsets of $\ell$ vertices such that they overlap in at least one vertex, then they cannot simultaneously be components isomorphic to $t$, i.e.\ $Y_{S_{1}}$ and $Y_{S_{2}}$ cannot simultaneously be $1$. When $S_{1}$ and $S_{2}$ do not overlap, $Y_{S_{1}}$ and $Y_{S_{2}}$ are independent. We thus get (using \eqref{a.s. eq 1}):
\begin{align}\label{a.s. eq 2}
\variance\left[\sum_{S \subseteq G(n, c n^{-1}), |S| = \ell} Y_{S}\right] &= \sum_{\substack{S \subseteq G(n, c n^{-1})\\|S| = \ell}} \left\{E\left[Y_{S}\right] - E^{2}\left[Y_{S}\right]\right\} - \sum_{\substack{|S_{1}| = |S_{2}| = \ell\\S_{1} \cap S_{2} \neq \emptyset}} E\left[Y_{S_{1}}\right] E\left[Y_{S_{2}}\right] \nonumber\\
&= \Theta(n) - \sum_{i=1}^{\ell-1} \sum_{\substack{|S_{1}| = |S_{2}| = \ell\\\bigl|S_{1} \cap S_{2}\bigr| = i}} E\left[Y_{S_{1}}\right] E\left[Y_{S_{2}}\right] \nonumber\\
&= \Theta(n) - \sum_{i=1}^{\ell-1} {n \choose i} {n-i \choose \ell - i} {n-\ell \choose \ell-i} \cdot \Theta\left(n^{-\ell+1}\right) \cdot \Theta\left(n^{-\ell+1}\right) = \Theta(n).
\end{align}
We now conclude that 
\begin{equation}
\variance\left[\sum_{S \subseteq G(n, c n^{-1}), |S| = \ell} Y_{S}\right] = o\left\{E^{2}\left[\sum_{S \subseteq G(n, c n^{-1}), |S| = \ell} Y_{S}\right]\right\}.
\end{equation}
From \cite[Corollary~4.3.3]{06}, we conclude that $\sum_{S \subseteq G(n, c n^{-1}), |S| = \ell} Y_{S} \sim E\left[\sum_{S \subseteq G(n, c n^{-1}), |S| = \ell} Y_{S}\right]$ almost always, which means that with probability tending to $1$, $G(n, cn^{-1})$ will contain $\Theta(n)$ isolated copies of $t$. 
\end{proof}
This lemma implies that $\yes_{\sigma}$ holds almost surely, as desired. 

\par Lastly, we have to verify the following: if $A$ is any FO sentence that holds almost surely for the sequence $p(n) = c n^{-1}$, then $A$ is derivable from $T$. We show the proof here conditional on us proving Theorem \ref{main_3}. Once we establish Theorem \ref{main_3}, we know the following, for every fixed $k \in \mathbb{N}$:
\begin{enumerate}
\item \label{conclusion_1} For any $\vec{n} \in I_{k}$, the limiting probability
$$\lim_{n \rightarrow \infty} \pr\left[G(n, c n^{-1}) \models \mathcal{A}_{\vec{n}}\right] > 0.$$
\item \label{conclusion_2} For a fixed $\vec{n} \in I_{k}$, we know the exact set of all FO sentences of quantifier depth $\leq k$ that hold for the theory $T + \mathcal{A}_{\vec{n}}$. That is, any countable model for the theory $T + \mathcal{A}_{\vec{n}}$ satisfies a specific set of FO sentences, of quantifier depth at most $k$, that only depends on $\vec{n}$. 
\item \label{conclusion_3} Any given countable model that satisfies $T$ must satisfy $\mathcal{A}_{\vec{n}}$ for exactly one $\vec{n} \in I_{k}$.
\end{enumerate}
Consider $A$ with quantifier depth $k$. Since $A$ holds almost surely with respect to $p(n) = c n^{-1}$, hence from \ref{conclusion_1} and \ref{conclusion_2} above, we can conclude that for every $\vec{n} \in I_{k}$, any countable model of $T + \mathcal{A}_{\vec{n}}$ satisfies $A$. Consequently, from \ref{conclusion_3}, any countable model of $T$ will satisfy $A$, and this shows that indeed $A$ is derivable from $T$. This completes the verification that $T$, as described in Theorem~\ref{main_1}, is indeed the almost sure theory with respect to $p(n) = c n^{-1}$.

\section{The distance preserving Ehrenfeucht game}\label{DEHR}
The distance preserving Ehrenfeucht game (DEHR) is needed as a local tool, for constructing Duplicator's winning strategy for the pebble-move Ehrenfeucht game on two countable models for $T + \mathcal{A}_{\vec{n}}$, $\vec{n} \in I_{k}$. This is a full information game played between Spoiler and Duplicator, where both players are assumed to play optimally. The players are given two graphs $G_{1}$ and $G_{2}$, the number of rounds $k$, and pairs $(x_{i}, y_{i})$, $1 \leq i \leq \ell$, in $G_{1} \times G_{2}$, where $\ell$ is some non-negative integer. These pairs are known as \emph{designated pairs}, and can be thought of as outcomes of earlier rounds in the game. In particular, when $\ell = 0$, no such pair is given. Let $C = \big\{(x_{i}, y_{i}) : 1 \leq i \leq \ell\big\}$.
\par In each of the $k$ rounds, Spoiler chooses \emph{either} of the two graphs, and then selects a vertex from that graph. In reply, within the same round, Duplicator chooses a vertex from the other graph. Let $x_{i+\ell}$ denote the vertex selected from $G_{1}$, and $y_{i+\ell}$ that from $G_{2}$, in round $i$, for $1 \leq i \leq k$. Duplicator wins this game \big(denoted $\DEHR\big[G_{1}, G_{2}, k, C\big]$\big), if \emph{all} of the following conditions hold: for all $i, j \in \{1, \ldots, k+\ell\}$, 
\begin{enumerate}[label={(DEHR \arabic*)},leftmargin=*]
\item \label{DEHR win 1} $\pi(x_{j}) = x_{i} \Leftrightarrow \pi(y_{j}) = y_{i}$,
\item \label{DEHR win 2} $\rho(x_{i}, x_{j}) = \rho(y_{i}, y_{j})$,
\item \label{DEHR win 3} $x_{i} = x_{j} \Leftrightarrow y_{i} = y_{j}$.
\end{enumerate}

\par Given graphs $G_{1}, G_{2}$, $k \in \mathbb{N}$, and $C = \big\{(x_{i}, y_{i}) : 1 \leq i \leq \ell\big\}$ of designated pairs, we call $C$ \emph{winnable} for $\big\{G_{1}, G_{2}, k\big\}$ if Duplicator wins $\DEHR\big[G_{1}, G_{2}, k, C\big]$. Moreover, given $G_{1}$, $G_{2}$, $k$, a winnable $C$ as above, and any vertex $x$ in $G_{1}$, we define $y$ in $G_{2}$ to be a \emph{corresponding vertex} to $x$ with respect to $\big\{G_{1}, G_{2}, k, C\big\}$, if $C \cup \{(x, y)\}$ is winnable for $\big\{G_{1}, G_{2}, k-1\big\}$. We analogously define a corresponding vertex in $G_{1}$ to every vertex in $G_{2}$ with respect to $\big\{G_{1}, G_{2}, k, C\big\}$. The choice of corresponding vertices need not be unique, but there exists at least one corresponding vertex in $G_{2}$ for every vertex in $G_{1}$, and vice versa, since $C$ is winnable.
\par Note that, when $G_{1}$ and $G_{2}$ are such that Duplicator wins $\DEHR[G_{1}, G_{2}, k]$, given any vertex $x \in G_{1}$, we can find a corresponding vertex $y$ in $G_{2}$ to $x$, and vice versa.

\par We now state a lemma that shows us that Duplicator wins the distance preserving Ehrenfeucht game when it is played on two trees of the same type (where the tree-type is defined with suitable parameters).

\begin{lem}\label{same_tree_type_DEHR}
Let $T_{1}, T_{2}$ be two trees with roots $\phi_{1}, \phi_{2}$, and the same $(m, k)$-type. Then Duplicator wins $\DEHR\big[T_{1}|_{m}, T_{2}|_{m}, k\big]$, with the designated pair $(\phi_{1}, \phi_{2})$.
\end{lem}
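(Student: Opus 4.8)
The plan is to prove the lemma by induction on $m$, using the recursive structure of both the $(m,k)$-type and the tree, together with a round-tracking argument for the Ehrenfeucht game. The base case $m=0$ is trivial: $T_1|_0$ and $T_2|_0$ are single roots $\phi_1,\phi_2$, the designated pair $(\phi_1,\phi_2)$ already matches them, and there are no further vertices to select, so Duplicator wins vacuously (all conditions \ref{DEHR win 1}--\ref{DEHR win 3} hold for the one designated pair).

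For the inductive step, assume the statement holds for $m-1$ with every cutoff, and suppose $T_1,T_2$ have the same $(m,k)$-type. By Definition~\ref{tree_type}, this means that for each $(m-1,k)$-type $\sigma\in\Sigma_{m-1,k}$, the numbers $n_\sigma^{(1)}$ and $n_\sigma^{(2)}$ of principal branches of $T_1$ and $T_2$ with $(m-1,k)$-type $\sigma$ satisfy $n_\sigma^{(1)}\wedge k = n_\sigma^{(2)}\wedge k$. I would set up a matching between the rootchildren of $T_1$ and those of $T_2$: for each type $\sigma$, pair up as many branches as possible (namely $\min(n_\sigma^{(1)},n_\sigma^{(2)})\ge \min(n_\sigma^{(1)}\wedge k, k)$ of them, which is at least $k$ whenever the common truncated count is $k$, and is exactly the full count when that count is $<k$), leaving ``surplus'' branches of type $\sigma$ on whichever side has more than $k$ of them. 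Duplicator's strategy is the standard one for playing on disjoint unions: track, for each pair of already-played vertices, which principal branch (or the root) it lies in, and maintain the invariant that the partial play restricted to each used pair of paired-up branches is itself a winning position for the appropriate shorter DEHR game on those branches truncated at depth $m-1$.

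The key steps, in order, are: (i) when Spoiler plays the root of one tree, Duplicator replies with the root of the other — this respects \ref{DEHR win 1} since the roots have no parents among the played vertices in the relevant sense, \ref{DEHR win 2} since root-to-root distance is handled by the branch-distance bookkeeping, and \ref{DEHR win 3}; (ii) when Spoiler plays a vertex $x$ in a principal branch $B$ of $T_i$ that has already been ``activated'' (i.e. some previous move landed in the branch paired with $B$), Duplicator consults the running sub-game on that pair of branches, which by the induction hypothesis and the definition of corresponding vertices (Section~\ref{DEHR}) still admits a corresponding reply with one fewer round remaining; (iii) when Spoiler plays in a branch $B$ not yet activated, Duplicator must \emph{choose} a branch on the other side to pair with $B$ — here Duplicator picks an unused branch of the same $(m-1,k)$-type, which exists because at most $k-1$ branches can have been activated in $k-1$ earlier rounds and the matching was built to have at least $k$ branches available in each type class that is ``full'', and exactly the right number otherwise; then Duplicator opens a fresh DEHR game on $B|_{m-1}$ and the chosen branch with the root-pair designated and the current round count, which is winnable by the induction hypothesis, and plays the corresponding vertex. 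One must also check that across distinct branches the distance condition \ref{DEHR win 2} is automatic: two vertices in different principal branches of a tree are at distance $d(x)+d(x')+2$ (through the root, where $d$ is depth in the branch plus one, or more carefully $\rho(x,x')=\rho(x,\phi)+\rho(\phi,x')$), and since depths are preserved within each matched branch-pair and the root is matched to the root, these cross-branch distances agree.

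The main obstacle I expect is the branch-matching/availability bookkeeping in step (iii): one has to argue carefully that Duplicator never ``runs out'' of an unused branch of the required $(m-1,k)$-type. This hinges on the fact that the number of earlier rounds in which a \emph{new} branch got activated is at most $k - (\text{rounds remaining}) - 1$ on each side, combined with the type-count equality $n_\sigma^{(1)}\wedge k = n_\sigma^{(2)}\wedge k$: if the common value is $k$ then both sides have $\ge k$ branches of type $\sigma$, more than enough; if it is some $j<k$ then both sides have exactly $j$ such branches, and Spoiler cannot force activation of a $(j{+}1)$-th one on either side since there is none. A secondary subtlety is making the round-count accounting precise — each time Duplicator opens a new sub-game the number of rounds it needs to survive is the number of rounds remaining in the master game, and the induction hypothesis gives Duplicator a win for \emph{that} many rounds, so one should phrase the invariant as ``with $r$ rounds left in the master game, every activated branch-pair is in a position that Duplicator wins for the $r$-round DEHR game on those depth-$(m-1)$-truncated branches.'' Once these two points are handled, verifying \ref{DEHR win 1}--\ref{DEHR win 3} at the end is routine: \ref{DEHR win 1} and \ref{DEHR win 3} reduce to the corresponding conditions inside each sub-game (parents and equalities never cross branch boundaries, except the root, which is matched to the root), and \ref{DEHR win 2} reduces to the within-branch distances (handled by the sub-games) and the cross-branch distances (handled by the depth-preservation-plus-root-matching observation above).
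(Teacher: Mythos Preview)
Your proposal is correct and follows essentially the same approach as the paper: induction on $m$, with Duplicator maintaining a branch-by-branch invariant (the paper's conditions \ref{same type cond 1} and \ref{same type cond 3}) and responding via the same three cases---root to root, occupied branch via a corresponding vertex, and fresh branch via the type-count equality $n_\sigma^{(1)}\wedge k = n_\sigma^{(2)}\wedge k$. The only cosmetic difference is that the paper phrases the invariant as ``winnable for $k-r$ rounds'' where $r$ is the cluster size, whereas you phrase it as ``winnable for the number of master-game rounds remaining''; the paper's version is nominally stronger but both suffice, and your availability and cross-branch distance arguments match the paper's.
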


\begin{proof}
The proof is via induction on $m$. The case $m = 0$ is immediate. Suppose the claim holds for some $m \geq 0$. Let $T_{1}$ and $T_{2}$ have the same $(m+1, k)$-types. For this proof, we abbreviate $\Sigma_{i, k}$ by $\Sigma_{i}$ for all $i$. For $\sigma \in \Sigma_{m}$, let $S_{\sigma, i}$, for $1 \leq i \leq n^{(1)}_{\sigma}$, be the principal branches of $T_{1}|_{m+1}$, and $T_{\sigma, j}$, for $1 \leq j \leq n^{(2)}_{\sigma}$, those of $T_{2}|_{m+1}$, with $(m, k)$-type $\sigma$. From the definition of types, we have
\begin{equation}\label{same type count}
n^{(1)}_{\sigma} \wedge k = n^{(2)}_{\sigma} \wedge k \text{ for all } \sigma \in \Sigma_{m}.
\end{equation}
By induction hypothesis, we know that, for all $1 \leq i \leq n^{(1)}_{\sigma}$ and $1 \leq j \leq n^{(2)}_{\sigma}$,
\begin{equation}\label{ind hyp same type}
\text{Duplicator wins } \DEHR\Big[S_{\sigma, i}, T_{\sigma, j}, k, \left(\phi^{1}_{\sigma, i}, \phi^{2}_{\sigma, j}\right)\Big], 
\end{equation}
where $\phi^{1}_{\sigma, i}$ is the root of $S_{\sigma, i}$ and $\phi^{2}_{\sigma, j}$ that of $T_{\sigma, j}$. Suppose $s$ rounds of the game have been played, and the vertex selected from $T_{1}|_{m+1}$ in round $i$ is $x_{i}$, and that from $T_{2}|_{m+1}$ is $y_{i}$. Duplicator maintains the following conditions on the configuration $\{(x_{i}, y_{i}): 1 \leq i \leq s\}$, for every $1 \leq s \leq k$: 
\begin{enumerate}[label={(A\arabic*)},leftmargin=*]
\item \label{same type cond 1} $x_{i} = \phi_{1} \Leftrightarrow y_{i} = \phi_{2}$.

\item \label{same type cond 3} For $1 \leq i_{1} < \cdots < i_{r} \leq s$, call $\big\{x_{i_{1}}, \ldots, x_{i_{r}}\big\}$ an \emph{$x$-cluster} up to round $s$, if they belong to a common principal branch, and no other $x_{j}$ selected so far belongs to it. We analogously define a \emph{$y$-cluster}, up to round $s$. Then $\big\{x_{i_{1}}, \ldots, x_{i_{r}}\big\}$ is an $x$-cluster iff $\big\{y_{i_{1}}, \ldots, y_{i_{r}}\big\}$ is a $y$-cluster, and the principal branches they belong to are of the same $(m, k)$-type.

\par Moreover, if $\big\{x_{i_{1}}, \ldots, x_{i_{r}}\big\}$ is an $x$-cluster in $S_{\sigma, \ell}$ for some $1 \leq \ell \leq n^{(1)}_{\sigma}$, and $\big\{y_{i_{1}}, \ldots, y_{i_{r}}\big\}$ a $y$-cluster in $T_{\sigma, \ell'}$ for some $1 \leq \ell' \leq n^{(2)}_{\sigma}$, then $\Big\{\big(\phi^{1}_{\sigma, \ell}, \phi^{2}_{\sigma, \ell'}\big), (x_{i_{1}}, y_{i_{1}}), \ldots, (x_{i_{r}}, y_{i_{r}})\Big\}$ is winnable for $\big\{S_{\sigma, \ell}, T_{\sigma, \ell'}, k-r\big\}.$

\end{enumerate}

\par We first show that Duplicator can maintain these conditions (via strong induction on $s$). Suppose Duplicator has maintained \ref{same type cond 1} and \ref{same type cond 3} up to round $s$. We call a principal branch (in either tree) \emph{free} if no vertex has been selected from it up to round $s$. Otherwise, we call it \emph{occupied}. For any $\sigma \in \Sigma_{m}$, there exists a free principal branch of type $\sigma$ in $T_{1}|_{m}$ iff there exists a free principal branch of type $\sigma$ in $T_{2}|_{m}$. This is evident from \ref{same type cond 3} and \eqref{same type count} (if \ref{same type cond 3} holds up to round $s$, then the numbers of principal branches of type $\sigma$ that are occupied by round $s$ will be equal in the two trees, for every $\sigma$).
\par Suppose Spoiler, without loss of generality, picks $x_{s+1}$ in round $s+1$. Duplicator's response is split into a few possible cases:
\begin{enumerate}[label={(B\arabic*)},leftmargin=*]
\item \label{same type response 1} If $x_{s+1} = \phi_{1}$, then Duplicator sets $y_{s+1} = \phi_{2}$.
\item \label{same type response 2} Suppose $x_{s+1} \in S_{\sigma, \ell}$ for some $\sigma \in \Sigma_{m}$ and $1 \leq \ell \leq n^{(1)}_{\sigma}$, such that $S_{\sigma, \ell}$ is occupied. Let $\big\{x_{i_{1}}, \ldots, x_{i_{r}}\big\}$ be the $x$-cluster up to round $s$ that belongs to $S_{\sigma, \ell}$. By induction hypothesis \ref{same type cond 3}, there exists some $1 \leq \ell' \leq n^{(2)}_{\sigma}$, such that the $y$-cluster $\big\{y_{i_{1}}, \ldots, y_{i_{r}}\big\} \in T_{\sigma, \ell'}$. Moreover $C = \Big\{\big(\phi^{1}_{\sigma, \ell}, \phi^{2}_{\sigma, \ell'}\big), (x_{i_{1}}, y_{i_{1}}), \ldots, (x_{i_{r}}, y_{i_{r}})\Big\}$ is winnable for $\left\{S_{\sigma, \ell}, T_{\sigma, \ell'}, k-r\right\}.$ By the definition of corresponding vertices, Duplicator finds a corresponding vertex to $x_{s+1}$ in $T_{\sigma, \ell'}$, with respect to $\Big\{S_{\sigma, \ell}, T_{\sigma, \ell'}, k-r, C\Big\}$, and sets it to be $y_{s+1}$. 
\par Note that $C \cup \big\{(x_{s+1}, y_{s+1})\big\}$ is now winnable for $\left\{S_{\sigma, \ell}, T_{\sigma, \ell'}, k-r-1\right\}$, which immediately satisfies \ref{same type cond 3}. 
\item \label{same type response 3} Suppose $x_{s+1} \in S_{\sigma, \ell}$ for some $\sigma \in \Sigma_{m}$ and $1 \leq \ell \leq n^{(1)}_{\sigma}$, such that $S_{\sigma, \ell}$ was free up to round $s$. Duplicator finds an $1 \leq \ell' \leq n^{(2)}_{\sigma}$ such that $T_{\sigma, \ell'}$ was free up to round $s$. By \eqref{ind hyp same type} and the definition of corresponding vertices, Duplicator finds $y_{s+1}$ in $T_{\sigma, \ell'}$ that is a corresponding vertex to $x_{s+1}$ with respect to $\left\{S_{\sigma, \ell}, T_{\sigma, \ell'}, k\right\}$. 
\end{enumerate}

\par It is straightforward to show that Conditions \ref{same type cond 1} and \ref{same type cond 3} imply \ref{DEHR win 1} through \ref{DEHR win 3}. We only show the verification of \ref{DEHR win 2} in the case where $x_{i}$ and $x_{j}$ do not belong to the same principal branch, and neither of them coincides with the root $\phi_{1}$. Suppose then $u_{1}, u_{2}$ be the two distinct children of $\phi_{1}$ such that $x_{i}$ belongs to the principal branch from $u_{1}$ and $x_{j}$ belongs to that from $u_{2}$. By \ref{same type cond 3}, there exist distinct children $v_{1}, v_{2}$ of $\phi_{2}$ such that $y_{i}$ belongs to the principal branch at $v_{2}$ and $y_{j}$ belongs to that at $v_{2}$. Moreover, \ref{same type cond 3} implies that $\rho(x_{i}, v_{1}) = \rho(y_{i}, v_{2})$ and $\rho(x_{j}, v'_{1}) = \rho(y_{j}, v'_{2})$. As the distance between $v_{1}$ and $v'_{1}$, as well as that between $v_{2}$ and $v'_{2}$, is $2$, hence $\rho(x_{i}, x_{j}) = \rho(x_{i}, v_{1}) + \rho(x_{j}, v'_{1}) + 2$, and $\rho(y_{i}, y_{j}) = \rho(y_{i}, v_{2}) + \rho(y_{j}, v'_{2}) + 2$, which gives us \ref{DEHR win 2} for $i, j$. 
\end{proof}

\section{Proof of the main result}\label{proof}
From the discussion of Subsection~\ref{FO_theory_model}, in order to prove Theorem~\ref{main_3}, it suffices to show the following. Fix $k \in \mathbb{N}$ and $\vec{n}$ in the index set $I_{k}$, as in Theorem~\ref{main_3}. Suppose $G_{1}, G_{2}$ are two countable models for the theory $T + \mathcal{A}_{\vec{n}}$. Then Duplicator wins $\EHR\left[G_{1}, G_{2}, k\right]$. 
\par Recall that $x_{i}$ is the vertex selected from $G_{1}$ and $y_{i}$ that from $G_{2}$ in round $i$ of the game. A crucial constituent of constructing a winning strategy for Duplicator will be the judicious selection of a couple of auxiliary vertices $u_{i}$ in $G_{1}$ and $v_{i}$ in $G_{2}$, in round $i$. The vertex $u_{i}$ will be in the same component as $x_{i}$, and $v_{i}$ in the same component as $y_{i}$. Moreover, if $x_{i}$ and $y_{i}$ are in short unicyclic components, then $u_{i}$ and $v_{i}$ are ancestors to $x_{i}$ and $y_{i}$ respectively (including the possibility that $u_{i} = x_{i}$ and $v_{i} = y_{i}$). We next introduce some terminology for ease of exposition of the winning strategy. 
 
\par We call a unicyclic component \emph{long} if its cycle length is more than $2 \cdot 3^{k+3}$, otherwise we call it \emph{short}. For $i, j \in \{1, \ldots, k\}$, we say that $x_{i}, x_{j}$ are \emph{close} if 
\begin{equation}\label{close_cond}
\rho(x_{i}, x_{j}) \leq 2 \cdot 3^{k+2-(i \vee j)},
\end{equation}
else we say that they are \emph{far}. We analogously define $y_{i}, y_{j}$ to be close or far. Suppose $1 \leq i_{1} < \cdots < i_{r} \leq j$ are such that $x_{i_{1}}, \ldots, x_{i_{r}}$ share a common auxiliary vertex $u$, i.e.\ $u_{i_{1}} = \cdots = u_{i_{r}} = u$, but for all other $t$ in $\{1, \ldots, j\}$, we have $u_{t}$ different from $u$. Then we say that $x_{i_{1}}, \ldots, x_{i_{r}}$ form an \emph{$x$-cluster} under $u$ up to round $j$. We analogously define a \emph{$y$-cluster} up to round $j$. If $x_{i}$ (respectively $y_{i}$) belongs to a unicyclic component in $G_{1}$ (respectively $G_{2}$), with 
\begin{equation}\label{deep_cond}
\rho\big(x_{i}, \an(x_{i})\big) \leq 2 \cdot 3^{k+2-i}
\end{equation}
then we say that $x_{i}$ (respectively $y_{i}$) is \emph{located shallow} in this component. We call a short unicyclic component (in either graph) \emph{free} up to round $j$ if $u_{\ell}$ (correspondingly $v_{\ell}$) is not a cycle-vertex of this component for any $1 \leq \ell \leq j$; otherwise, we call it \emph{occupied} by round $j$.


\par Throughout this proof, we maintain the convention set down in Remark~\ref{cycle_convention} while representing unicyclic graphs. 

\par Duplicator maintains the following conditions on the configuration $\left\{(x_{i}, y_{i}): 1 \leq i \leq j\right\}$ as well as the auxiliary pairs $\left\{(u_{i}, v_{i}): 1 \leq i \leq j\right\}$, where $j$ is the number of rounds played so far. 
\begin{enumerate}[label={(Cond \arabic*)},leftmargin=*]

\item \label{cond 1} If $x_{i}$ is located shallow in a short unicyclic component, then $u_{i} = \an(x_{i})$, i.e.\ $u_{i}$ is its cycle-ancestor. Similarly, if $y_{i}$ is located shallow in a unicyclic component, then $v_{i} = \an(y_{i})$.

\item \label{cond 2} For all $i, \ell \in \{1, \ldots, j\}$, $x_{i}, x_{\ell}$ are close iff $y_{i}, y_{\ell}$ are close, and in that case, $\rho(x_{i}, x_{\ell}) = \rho(y_{i}, y_{\ell})$. Furthermore, when these pairs are close, we have $u_{i} = u_{\ell}$ and $v_{i} = v_{\ell}$, except for the scenario where $x_{i}, x_{\ell}$ are located shallow in a common short unicyclic component  but have different cycle-ancestors.

\item \label{cond 3} Suppose $x_{i}$ is neither close to any previously selected $x_{\ell}$ for $1 \leq \ell \leq i-1$ nor located shallow in any short unicyclic component. Then we have $u_{i} = x_{i}$ and $v_{i} = y_{i}$.

\item \label{cond 4} For every $1 \leq i \leq j$, the auxiliary vertex $u_{i}$ equals some cycle-vertex $a_{t}$ of a short unicyclic component $U_{1} = \big(a_{1}, a_{2}, \ldots, a_{s}; A_{1}, A_{2}, \ldots, A_{s}\big)$ in $G_{1}$ iff $v_{i}$ equals $b_{t}$ for some unicyclic component $U_{2} = \big(b_{1}, b_{2}, \ldots, b_{s}; B_{1}, B_{2}, \ldots, B_{s}\big)$ in $G_{2}$. In this case, $U_{1}$ and $U_{2}$ have the same $\big(s, 3^{k+3-\beta}, k\big)$-type, where $\beta$ is the smallest index such that $u_{\beta}$ equals a cycle-vertex of $U_{1}$. Moreover, for $i, \ell \in \{1, \ldots, j\}$, $u_{i}$ and $u_{\ell}$ are cycle-vertices of a common short unicyclic component in $G_{1}$ if and only if $v_{i}$ and $v_{\ell}$ are cycle-vertices of a common short unicyclic component in $G_{2}$.

\item \label{cond 5} For $1 \leq i_{1} < \cdots < i_{r} \leq j$, vertices $x_{i_{1}}, \ldots, x_{i_{r}}$ form an $x$-cluster, up to round $j$, under some vertex $u$ if and only if $y_{i_{1}}, \ldots, y_{i_{r}}$ form a $y$-cluster, up to round $j$, under some vertex $v$. The following also hold, described in two separate cases.
\par If $u$ is a cycle-vertex $a_{t}$ of some short unicyclic component $U_{1} = \big(a_{1}, a_{2}, \ldots, a_{s}; A_{1}, A_{2}, \ldots, A_{s}\big)$, then by \ref{cond 4}, $v$ must be the cycle-vertex $b_{t}$ of a unicyclic component $U_{2} = \big(b_{1}, b_{2}, \ldots, b_{s}; B_{1}, B_{2}, \ldots, B_{s}\big)$, with the same $\big(s, 3^{k+3-\beta}, k\big)$-type, where $\beta$ is the index defined in \ref{cond 4}. Moreover, $\Big\{\big(u, v\big), \big(x_{i_{1}}, y_{i_{1}}\big), \ldots, \big(x_{i_{r}}, y_{i_{r}}\big)\Big\}$ is winnable for $\Big\{A_{t}\big|_{3^{k+3-\beta}}, B_{t}\big|_{3^{k+3-\beta}}, k-r\Big\}$ in this case.
\par If $u$ is not a cycle-vertex of a short unicyclic component, then the trees $B\left(u, 3^{k+2-i_{1}}\right)$ and $B\left(v, 3^{k+2-i_{1}}\right)$ have the same $\left(3^{k+2-i_{1}}, k\right)$-type. Furthermore, $\Big\{\big(u, v\big), \big(x_{i_{1}}, y_{i_{1}}\big), \ldots, \big(x_{i_{r}}, y_{i_{r}}\big)\Big\}$ is winnable for $\Big\{B\left(u, 3^{k+2-i_{1}}\right), B\left(v, 3^{k+2-i_{1}}\right), k-r\Big\}$.

\item \label{cond 6} For every $1 \leq i \leq j$, if $u_{i}$ is a cycle-vertex of a short unicyclic component $U_{1}$, then we have $\rho\big(u_{i}, x_{i}\big) \leq 3^{k+3-\beta} - 3^{k+2-i}$ where $\beta$ is the smallest index for which $u_{\beta}$ is a cycle-vertex of $U_{1}$; otherwise we have $\rho\big(u_{i}, x_{i}\big) \leq 3^{k+2-\beta} - 3^{k+2-i}$ where $\beta$ is the smallest index for which $u_{\beta}$ is the same as $u_{i}$ (notice that $\beta$ can be at most $i$ in either case).

\end{enumerate}

\par Before going into the detailed analysis of how these conditions are maintained throughout the game, let us discuss here some immediate consequences of these conditions, in the following remarks.
\begin{rem}\label{rem_1}
Suppose \ref{cond 5} has been satisfied up to and including round $j$. For any $u$ in $G_{1}$, let $1 \leq i_{1} < \cdots < i_{r} \leq j$, be such that $x_{i_{1}}, \ldots, x_{i_{r}}$ form the $x$-cluster, up to round $j$, under $u$ . Then $y_{i_{1}}, \ldots, y_{i_{r}}$ form the $y$-cluster under some vertex $v$ in $G_{2}$. Moreover, $C = \Big\{(u, v), \big(x_{i_{1}}, y_{i_{1}}\big), \ldots, \big(x_{i_{r}}, y_{i_{r}}\big)\Big\}$ is a winnable configuration for the distance preserving Ehrenfeucht game of $k-r$ rounds on the trees of appropriate depth rooted at $u$ and $v$. By applying \ref{DEHR win 2} to this winnable configuration, we can then conclude that for every $1 \leq t \leq r$, we have
$$\rho\big(x_{i_{t}}, u\big) = \rho\big(y_{i_{t}}, v\big).$$
In particular, for all $1 \leq \ell \leq j$,
\begin{equation}\label{conclusion_rem_1}
\rho\big(u_{\ell}, x_{\ell}\big) = \rho\big(v_{\ell}, y_{\ell}\big).
\end{equation}
We also have, for the same reason, for $t, t' \in \{1, \ldots, r\}$,
\begin{equation}\label{conclusion'_rem_1}
\rho\left(x_{i_{t}}, x_{i_{t'}}\right) = \rho\left(y_{i_{t}}, y_{i_{t'}}\right).
\end{equation}
\end{rem} 

\begin{rem}\label{rem_3}
Fix integers $3 \leq s \leq 2 \cdot 3^{k+3}$ and $0 \leq m \leq 3^{k+3}$, and an $(s, m, k)$-type $\gamma \in \Gamma_{s, m, k}$. Let $m^{1}_{\gamma}$ and $m^{2}_{\gamma}$ be the numbers of unicyclic components with $(s, m, k)$-type $\gamma$, in $G_{1}$ and $G_{2}$ respectively. As $G_{1}$ and $G_{2}$ are models for $T + \mathcal{A}_{\vec{n}}$, for the $\vec{n}$ fixed at the beginning of Section~\ref{proof}, we know that
\begin{equation}\label{same_count_1}
m^{1}_{\gamma} \wedge k = m^{2}_{\gamma} \wedge k = n_{\gamma}.
\end{equation}
Suppose \ref{cond 4} holds for the first $j$ rounds of the game. Let $\mu^{1}_{\gamma}$ and $\mu^{2}_{\gamma}$ respectively denote the numbers of unicyclic components in $G_{1}$ and $G_{2}$, of $(s, m, k)$-type $\gamma$, that have been occupied by round $j$. If $\mu^{1}_{\gamma}$ and $\mu^{2}_{\gamma}$ are not equal, assume without loss of generality that $\mu^{1}_{\gamma} < \mu^{2}_{\gamma}$. Then there must exist some distinct $\ell, \ell' \in \{1, \ldots, j\}$ such that $u_{\ell}$ and $u_{\ell'}$ are cycle-vertices in a common unicyclic component of type $\gamma$ in $G_{1}$, but $v_{\ell}$ and $v_{\ell'}$ are cycle-vertices in two distinct components of type $\gamma$ in $G_{2}$. But this contradicts the last part of \ref{cond 4}. Hence we must have $\mu^{1}_{\gamma} = \mu^{2}_{\gamma}$.
\par The conclusion from this observation and \eqref{same_count_1} is the following: if there exists a unicyclic component of $(s, m, k)$-type $\gamma$ in $G_{1}$ that has been free up to round $j$, then there has to exist a unicyclic component of $(s, m, k)$-type $\gamma$ in $G_{2}$ that has also been free up to round $j$, and vice versa.
\end{rem}

\par We show, via induction on $j$, that the conditions above can indeed be maintained. The base case is the first round. Without loss of generality, let Spoiler select vertex $x_{1}$ from $G_{1}$. The first case is where $x_{1}$ is located shallow in a short unicyclic component $U_{1} = \big(a_{1}, \ldots, a_{s}; A_{1}, \ldots, A_{s}\big)$, with $\an(x_{i}) = a_{t}$ for some $1 \leq t \leq s$. We find $U_{2} = \big(b_{1}, \ldots, b_{s}; B_{1}, \ldots, B_{s}\big)$ in $G_{2}$ that is of the same $\big(s, 3^{k+2}, k\big)$-type as $U_{1}$. We set $u_{1} = a_{t}$ and $v_{1} = b_{t}$, then select $y_{1}$ in $B_{t}$ to be a corresponding vertex to $x_{1}$ with respect to $\left\{A_{t}|_{3^{k+2}}, B_{t}|_{3^{k+2}}, k\right\}$. By \eqref{conclusion_rem_1}, $y_{1}$ is located shallow in $U_{2}$. These choices immediately satisfy \ref{cond 1}, \ref{cond 4}, \ref{cond 5} and \ref{cond 6} (\ref{cond 2} and \ref{cond 3} do not apply).

\par The second case is where $x_{1}$ is not located shallow in any short unicyclic component. In this case, we set $u_{1} = x_{1}$. We find a tree component in $G_{2}$ that has the same type as the tree $B\left(x_{1}, 3^{k+1}\right)$ rooted at $x_{1}$. Then we set the root of this component to be both $y_{1}$ and $v_{1}$. This satisfies \ref{cond 3}, and \ref{cond 5} holds by Lemma \ref{same_tree_type_DEHR}. \ref{cond 1}, \ref{cond 2}, \ref{cond 4} and \ref{cond 6} do not apply here.

\par We now come to the inductive argument. Suppose $j$ rounds have been played so far, and \ref{cond 1} through \ref{cond 5} hold for the current configuration $\big\{(x_{i}, y_{i}): 1 \leq i \leq j\big\}$. Without loss of generality, let Spoiler select $x_{j+1}$ from $G_{1}$ in the $(j+1)$-st round. Duplicator's response needs to be classified into several cases, and these are discussed separately in the subsequent nested subsections. In the analysis of each case, we describe Duplicator's response, and then show that the desired conditions hold for that response.

\subsection{Close move, located shallow in a short unicyclic component:}\label{close,deep} Suppose there exists $1 \leq \alpha \leq j$ such that $x_{j+1}$ is close to $x_{\alpha}$, and $x_{j+1}$ is located shallow in a short unicyclic component $U_{1}$. Let $U_{1} = \big(a_{1}, \ldots, a_{s}; A_{1}, \ldots, A_{s}\big)$, and $\an(x_{j+1}) = a_{t}$ for some $1 \leq t \leq s$. Firstly, observe that this implies that $x_{\alpha}$ is also located shallow in $U_{1}$. The reason is as follows: from \eqref{close_cond} and \eqref{deep_cond}, we have:
\begin{align}
\rho\big(\an(x_{\alpha}), x_{\alpha}\big) &\leq \rho\big(a_{t}, x_{\alpha}\big) \nonumber\\
&\leq \rho\big(\an(x_{j+1}), x_{j+1}\big) + \rho\big(x_{j+1}, x_{\alpha}\big) \nonumber\\
&\leq 2 \cdot 3^{k+1-j} + 2 \cdot 3^{k+1-j} < 2 \cdot 3^{k+2-\alpha},
\end{align}
since $\alpha \leq j$. Then, by induction hypothesis \ref{cond 1}, $u_{\alpha} = \an(x_{\alpha}) = a_{i}$ (say, for some $1 \leq i \leq s$), where $i$ may or may not equal $t$. By induction hypothesis \ref{cond 4}, we know that $v_{\alpha} = b_{i}$ for some short unicyclic component $U_{2} = \big(b_{1}, \ldots, b_{s}; B_{1}, \ldots, B_{s}\big)$, where $U_{1}$ and $U_{2}$ have the same $\big(s, 3^{k+3-\beta}, k\big)$-type, for index $\beta$ as defined in \ref{cond 4}. We set $v_{j+1} = b_{t}$. To select $y_{j+1}$, we have to consider two possibilities, as follows.


\par If $1 \leq i_{1} < \ldots < i_{r} \leq j$ are such that, $x_{i_{1}}, \ldots, x_{i_{r}}$ form the $x$-cluster, up to round $j$, under $a_{t}$, then by induction hypothesis \ref{cond 4} and \ref{cond 5}, we know that $y_{i_{1}}, \ldots, y_{i_{r}}$ form the $y$-cluster, up to round $j$, under $b_{t}$; moreover, $C = \Big\{\big(a_{t}, b_{t}\big), \big(x_{i_{1}}, y_{i_{1}}\big), \ldots, \big(x_{i_{r}}, y_{i_{r}}\big)\Big\}$ is winnable for $\Big\{A_{t}\big|_{3^{k+3-\beta}}, B_{t}\big|_{3^{k+3-\beta}}, k-r\Big\}$. We set $y_{j+1}$ to be a corresponding vertex in $B_{t}$ to $x_{j+1}$ with respect to $\Big\{A_{t}\big|_{3^{k+3-\beta}}, B_{t}\big|_{3^{k+3-\beta}}, k-r, C\Big\}$. By definition of corresponding vertices, $C' = C \cup \Big\{\big(x_{j+1}, y_{j+1}\big)\Big\}$ is winnable for $\Big\{A_{t}\big|_{3^{k+3-\beta}}, B_{t}\big|_{3^{k+3-\beta}}, k-r-1\Big\}$, and this immediately satisfies \ref{cond 4} and \ref{cond 5}. Furthermore, from \eqref{conclusion_rem_1} and the fact that $x_{j+1}$ is located shallow in $U_{1}$, we conclude that $\rho(b_{t}, y_{j+1}) = \rho(a_{t}, x_{j+1}) \leq 2 \cdot 3^{k+1-j}$, hence \ref{cond 6} holds. Also, this tells us that $y_{j+1}$ is located shallow in $U_{2}$. Our choices then tell us that \ref{cond 1} holds.

\par The other possibility is that, there exists no $1 \leq \ell \leq j$ such that $u_{\ell} = a_{t}$. We know by Lemma~\ref{same_tree_type_DEHR} that Duplicator wins $\DEHR\Big[A_{t}\big|_{3^{k+3-\beta}}, B_{t}\big|_{3^{k+3-\beta}}, k, (a_{t}, b_{t})\Big]$ since $A_{t}\big|_{3^{k+3-\beta}}$ and $B_{t}\big|_{3^{k+3-\beta}}$ are of the same $\big(3^{k+3-\beta}, k\big)$-type. So we now select $y_{j+1}$ to be a corresponding vertex to $x_{j+1}$, with respect to $\Big\{A_{t}\big|_{3^{k+3-\beta}}, B_{t}\big|_{3^{k+3-\beta}}, k, \big(a_{t}, b_{t}\big)\Big\}$. Once again, the resulting configuration $C' = \Big\{\big(a_{t}, b_{t}\big), \big(x_{j+1}, y_{j+1}\big)\Big\}$ is winnable for $\Big\{A_{t}\big|_{3^{k+3-\beta}}, B_{t}\big|_{3^{k+3-\beta}}, k-1\Big\}$, thus satisfying \ref{cond 4} and \ref{cond 5}. As above, \ref{cond 6} and \ref{cond 1} hold as well.

\par We are only left to verify \ref{cond 2}, since \ref{cond 3} does not apply to this case. For $i_{1}, \ldots, i_{r}$, \ref{cond 2} is already verified from \eqref{conclusion'_rem_1}. Consider any $1 \leq \ell \leq j$ such that $u_{\ell} = a_{i'}$ with $i'$ different from $t$. By induction hypothesis \ref{cond 4} applied to round $\ell$, we must have $v_{\ell} = \an(y_{\ell}) = b_{i'}$. By \eqref{conclusion_rem_1} applied to round $\ell$, we know that 
\begin{equation}\label{equidistant_3}
\rho\big(a_{i'}, x_{\ell}\big) = \rho\big(b_{i'}, y_{\ell}\big).
\end{equation}
For round $j+1$, due to the same reason, we have
\begin{equation}\label{equidistant_1}
\rho\big(a_{t}, x_{j+1}\big) = \rho\big(b_{t}, y_{j+1}\big).
\end{equation}
Further, we know that $\rho\big(a_{t}, a_{i'}\big) = \rho\big(b_{t}, b_{i'}\big)$, as $U_{1}$ and $U_{2}$ have the same cycle length. Combining this fact with \eqref{equidistant_1} and \eqref{equidistant_3}, we get
\begin{align}
\rho\big(x_{\ell}, x_{j+1}\big) &= \rho\big(x_{\ell}, a_{i'}\big) + \rho\big(a_{i'}, a_{t}\big) + \rho\big(a_{t}, x_{j+1}\big) \nonumber\\
&= \rho\big(y_{\ell}, b_{i'}\big) + \rho\big(b_{i'}, b_{t}\big) + \rho\big(b_{t}, y_{j+1}\big) \nonumber\\
&= \rho\big(y_{\ell}, y_{j+1}\big).
\end{align}
This shows that if $x_{j+1}, x_{\ell}$ are close, then so are $y_{j+1}, y_{\ell}$, and their mutual distances are equal. This verifies \ref{cond 2} for all such pairs $\ell, j+1$. 
\par Consider now some $1 \leq \ell \leq j$ such that $u_{\ell}$ is \emph{not} a cycle-vertex of $U_{1}$. We show that in this case, $x_{\ell}, x_{j+1}$ must be far from each other, and so will be $y_{\ell}, y_{j+1}$. Firstly, if $x_{\ell}, x_{j+1}$ were indeed close, then we would have, from \eqref{deep_cond} applied to round $j+1$, and \eqref{close_cond} applied to the pairs $x_{\ell}, x_{j+1}$, 
\begin{align}
\rho\big(a_{t}, x_{\ell}\big) & \leq \rho\big(a_{t}, x_{j+1}\big) + \rho\big(x_{j+1}, x_{\ell}\big) \nonumber\\
&\leq 2 \cdot 3^{k+1-j} + 2\cdot 3^{k+1-j} < 2 \cdot 3^{k+2-\ell}, \quad \text{as } \ell \leq j.
\end{align}
As $a_{t}$ is a cycle-vertex of $U_{1}$, this means that $x_{\ell}$ must be located shallow in $U_{1}$, and hence $u_{\ell}$ ought to be a cycle-vertex of $U_{1}$ by induction hypothesis \ref{cond 1}. This leads to a contradiction to our initial assumption. Hence $x_{\ell}$ and $x_{j+1}$ must be far from each other. The argument for showing that $y_{j+1}$ and $y_{\ell}$ are far is very similar. This finally completes the verification of \ref{cond 2}.


\subsection{Close move, not located shallow in any short unicyclic component:}\label{close,shallow} Suppose $x_{j+1}$ is close to $x_{\alpha}$ for some $1 \leq \alpha \leq j$, but is not itself located shallow in any short unicyclic component in $G_{1}$. Note that \ref{cond 1} and \ref{cond 3} do not apply here. We set $u_{j+1} = u_{\alpha}$ and $v_{j+1} = v_{\alpha}$, which immediately satisfies the second part of \ref{cond 2}. Suppose $1 \leq i_{1} < \cdots < i_{r} \leq j$ are such that $x_{i_{1}}, \ldots, x_{i_{r}}$ form the $x$-cluster, up to round $j$, under $u_{\alpha}$ ($\alpha$ itself is in the set $\{i_{1}, \ldots, i_{r}\}$). By induction hypothesis \ref{cond 5}, $y_{i_{1}}, \ldots, y_{i_{r}}$ form the $y$-cluster up to round $j$ under $v_{\alpha}$. There are now two possibilities to consider.

\par The first possibility is that $u_{\alpha}$ is a cycle-vertex $a_{t}$ of a short unicyclic component $U_{1} = \big(a_{1}, \ldots, a_{s}; A_{1}, \ldots, A_{s}\big)$. Then by induction hypothesis \ref{cond 4} we know that $v_{\alpha}$ must be the cycle-vertex $b_{t}$ of a unicyclic component $U_{2} = \big(b_{1}, \ldots, b_{s}; B_{1}, \ldots, B{s}\big)$ of $G_{2}$, such that $U_{1}$ and $U_{2}$ have the same $\big(s, 3^{k+3-\beta}, k\big)$-type, where $\beta$ is as defined in \ref{cond 4}. Then $A_{t}$ and $B_{t}$ have the same $\big(3^{k+3-\beta}, k\big)$-type, and by induction hypothesis \ref{cond 5}, the configuration $C = \Big\{\big(u_{\alpha}, v_{\alpha}\big), \big(x_{i_{1}}, y_{i_{1}}\big), \ldots, \big(x_{i_{r}}, y_{i_{r}}\big)\Big\}$ is winnable for $\Big\{A_{t}\big|_{3^{k+3-\beta}}, B_{t}\big|_{3^{k+3-\beta}}, k-r\Big\}$. By induction hypothesis \ref{cond 6}, we know that $\rho\big(x_{\alpha}, u_{\alpha}\big) \leq 3^{k+3-\beta} - 3^{k+2-\alpha}$. Using this inequality and \eqref{close_cond} applied to the pairs $x_{\alpha}, x_{j+1}$, we get:
\begin{align}
\rho\big(u_{\alpha}, x_{j+1}\big) &\leq \rho\big(u_{\alpha}, x_{\alpha}\big) + \rho\big(x_{\alpha}, x_{j+1}\big) \nonumber\\
&\leq 3^{k+3-\beta} - 3^{k+2-\alpha} + 2 \cdot 3^{k+1-j} \nonumber\\
&\leq 3^{k+3-\beta} - 3^{k+2-j} + 2 \cdot 3^{k+1-j}, \quad \text{as } \alpha \leq j, \nonumber\\
&= 3^{k+3-\beta} - 3^{k+1-j},
\end{align}
which at once verifies \ref{cond 6} and shows us that $x_{j+1}$ belongs to $A_{t}|_{3^{k+3-\beta}}$. We can therefore find a corresponding vertex $y_{j+1}$ in $B_{t}|_{3^{k+3-\beta}}$ to $x_{j+1}$ with respect to $\Big\{A_{t}\big|_{3^{k+3-\beta}}, B_{t}\big|_{3^{k+3-\beta}}, k-r, C\Big\}$. By definition of corresponding vertices, we then have $C' = C \cup \big\{(x_{j+1}, y_{j+1})\big\}$ winnable for $\Big\{A_{t}\big|_{3^{k+3-\beta}}, B_{t}\big|_{3^{k+3-\beta}}, k-r-1\Big\}$, and this shows that \ref{cond 5} holds for round $j+1$. From our choice of $u_{j+1}$ and $v_{j+1}$ and the already observed fact that $U_{1}$ and $U_{2}$ have the same $\big(s, 3^{k+3-\beta}, k\big)$-type, we conclude that \ref{cond 4} holds as well.

\par The second possibility is that $u_{\alpha}$ is \emph{not} a cycle-vertex of any short unicyclic component (hence \ref{cond 4} does not apply here). By induction hypothesis \ref{cond 5}, the trees $B\big(u_{\alpha}, 3^{k+2-i_{1}}\big)$ and $B\big(v_{\alpha}, 3^{k+2-i_{1}}\big)$ have the same $\big(3^{k+2-i_{1}}, k\big)$-type, and $C = \Big\{\big(u_{\alpha}, v_{\alpha}\big), \big(x_{i_{1}}, y_{i_{1}}\big), \ldots, \big(x_{i_{r}}, y_{i_{r}}\big)\Big\}$ is winnable for $\Big\{A_{t}\big|_{3^{k+2-i_{1}}}, B_{t}\big|_{3^{k+2-i_{1}}}, k-r\Big\}$. By induction hypothesis \ref{cond 6}, we know that $\rho\big(x_{\alpha}, u_{\alpha}\big) \leq 3^{k+2-i_{1}} - 3^{k+2-\alpha}$. Using this inequality and \eqref{close_cond} applied to the pairs $x_{\alpha}, x_{j+1}$, we get:
\begin{align}
\rho\big(u_{\alpha}, x_{j+1}\big) &\leq \rho\big(x_{\alpha}, u_{\alpha}\big) + \rho\big(x_{\alpha}, x_{j+1}\big) \nonumber\\
&\leq 3^{k+2-i_{1}} - 3^{k+2-\alpha} + 2 \cdot 3^{k+1-j} \nonumber\\
&\leq 3^{k+2-i_{1}} - 3^{k+2-j} + 2 \cdot 3^{k+1-j}, \quad \text{as } \alpha \leq j, \nonumber\\
&= 3^{k+2-i_{1}} - 3^{k+1-j},
\end{align}
which at once verifies \ref{cond 6} and shows us that $x_{j+1}$ belongs to $B\left(u_{\alpha}, 3^{k+2-i_{1}}\right)$. We can therefore find a corresponding vertex $y_{j+1}$ in $B\left(v_{\alpha}, 3^{k+2-i_{1}}\right)$ to $x_{j+1}$ with respect to $\Big\{B\left(u_{\alpha}, 3^{k+2-i_{1}}\right), B\left(v_{\alpha}, 3^{k+2-i_{1}}\right), k-r, C\Big\}$. By definition of corresponding vertices, we then have $C' = C \cup \big\{(x_{j+1}, y_{j+1})\big\}$ winnable for $\Big\{B\left(u_{\alpha}, 3^{k+2-i_{1}}\right), B\left(v_{\alpha}, 3^{k+2-i_{1}}\right), k-r-1\Big\}$, and this shows that \ref{cond 5} holds for round $j+1$.  

\par We are just left to verify \ref{cond 2} for both the above possibilities. For both the possibilities, by \eqref{conclusion'_rem_1} we have
\begin{equation}\label{equidistant_4_general}
\rho\left(x_{j+1}, x_{i_{t}}\right) = \rho\left(y_{j+1}, y_{i_{t}}\right), \quad \text{for all } 1 \leq t \leq r. 
\end{equation}
In particular, we get (since $\alpha \in \{i_{1}, \ldots, i_{r}\}$):
\begin{equation}\label{equidistant_4}
\rho\left(x_{j+1}, x_{\alpha}\right) = \rho\left(y_{j+1}, y_{\alpha}\right). 
\end{equation}
We next show that for all $1 \leq \ell \leq j$ such that $\ell \notin \{i_{1}, \ldots, i_{r}\}$, the pairs $x_{j+1}, x_{\ell}$ and $y_{j+1}, y_{\ell}$ are far. We do this in two separate cases.

\par Suppose $u_{\alpha}$ is a cycle-vertex $a_{t}$ of a short unicyclic component $U_{1} = \big(a_{1}, \ldots, a_{s}; A_{1}, \ldots, A_{s}\big)$. We have already observed above that $x_{j+1}$ is inside tree $A_{t}|_{3^{k+3-\beta}}$, but since it is not located shallow in $U_{1}$, we have $\rho(x_{j+1}, a_{t}) > 2 \cdot 3^{k+1-j}$. We first consider $\ell$ such that $\ell \notin \{i_{1}, \ldots, i_{r}\}$ but $x_{\ell}$ is close to $x_{\alpha}$. Since $x_{\ell}$ does not belong to the $x$-cluster under $u_{\alpha}$, we have $u_{\ell} \neq u_{\alpha}$. Induction hypothesis \ref{cond 2} and our assumption that $x_{\ell}$ and $x_{\alpha}$ are close together imply that $u_{\ell}$ equals $a_{t'}$ for some $1 \leq t' \leq s$ that is distinct from $t$; moreover, $x_{\ell}$ and $x_{\alpha}$ are both located shallow in $U_{1}$. By induction hypothesis \ref{cond 4}, we know that $v_{\alpha} = b_{t}$ and $v_{\ell} = b_{t'}$ for some short unicyclic component $U_{2} = \big(b_{1}, \ldots, b_{s}; B_{1}, \ldots, B_{s}\big)$ in $G_{2}$ with same $\left(s, 3^{k+3-\beta}, k\right)$-type. By \eqref{conclusion_rem_1}, we know that $\rho(y_{\ell}, b_{t'}) = \rho(x_{\ell}, a_{t'})$. 
\par Now, we have selected $y_{j+1}$ as described above, inside $B_{t}|_{3^{k+3-\beta}}$. By \eqref{conclusion_rem_1}, we know that $\rho(y_{j+1}, b_{t}) = \rho(x_{j+1}, a_{t})$. Also, $\rho(a_{t}, a_{t'}) = \rho(b_{t}, b_{t'})$ as $U_{1}$ and $U_{2}$ have the same cycle length. Thus
\begin{align}
\rho(x_{j+1}, x_{\ell}) &= \rho(x_{j+1}, a_{t}) + \rho(a_{t}, a_{t'}) + \rho(a_{t'}, x_{\ell}) \nonumber\\
&= \rho(y_{j+1}, b_{t}) + \rho(b_{t}, b_{t'}) + \rho(b_{t'}, y_{\ell}) = \rho(y_{j+1}, y_{\ell}), \nonumber
\end{align}
and both are greater than $2 \cdot 3^{k+1-j}$ since $x_{j+1}$ is not located shallow inside $U_{1}$ (hence $\rho(x_{j+1}, a_{t}) > 2 \cdot 3^{k+1-j}$).
\par We next consider $\ell$ not in $\{i_{1}, \ldots, i_{r}\}$, and such that $x_{\ell}$ is not close to $x_{\alpha}$. By induction hypothesis \ref{cond 2}, $y_{\ell}$ and $y_{\alpha}$ are far as well. Since $x_{j+1}$ and $x_{\alpha}$ are close, we use triangle inequality to observe that
\begin{align}
\rho(x_{j+1}, x_{\ell}) &\geq \rho(x_{\ell}, x_{\alpha}) - \rho(x_{\alpha}, x_{j+1}) \nonumber\\
&\geq 2 \cdot 3^{k+2-(\alpha \vee \ell)} - 2 \cdot 3^{k+1-j} > 2 \cdot 3^{k+1-j}, \nonumber
\end{align}
hence showing that $x_{j+1}$ and $x_{\ell}$ are far from each other. We show that $y_{j+1}$ and $y_{\ell}$ are far by applying \eqref{equidistant_4} and then using a very similar argument. 

\par Suppose $u_{\alpha}$ is not a cycle-vertex of any short unicyclic component in $G_{1}$. In this case, for any $\ell \notin \{i_{1}, \ldots, i_{r}\}$, since $u_{\ell} \neq u_{\alpha}$, by induction hypothesis \ref{cond 2}, $x_{\ell}$ must be far from $x_{\alpha}$. So, the arguments for showing that $x_{j+1}$ is far from $x_{\ell}$ and $y_{j+1}$ from $y_{\ell}$, are exactly the same as the last part of the above case.

\subsection{Far move, located shallow in a short unicyclic component:}\label{far,deep} Suppose $x_{j+1}$ is far from $x_{\ell}$ for every $1 \leq \ell \leq j$, and located shallow in a short unicyclic component $U_{1}$ of $G_{1}$. Let $U_{1} = \big(a_{1}, \ldots, a_{s}; A_{1}, \ldots, A_{s}\big)$ with $\an(x_{j+1}) = a_{i}$. Note that \ref{cond 3} does not apply here. We firstly set $u_{j+1} = a_{i}$. Then $\rho\big(u_{j+1}, x_{j+1}\big) \leq 2 \cdot 3^{k+1-j}$, by \eqref{deep_cond}. This shows that \ref{cond 6} holds. Next, there are two possible scenarios to consider. 
\par The first is that there exists at least one $1 \leq \ell \leq j$ such that $u_{\ell}$ is a cycle-vertex of $U_{1}$. Let $\beta$ be the smallest such index. By induction hypothesis \ref{cond 4}, we know that there exists a unicyclic component $U_{2} = \big(b_{1}, \ldots, b_{s}; B_{1}, \ldots, B_{s}\big)$ in $G_{2}$ such that $v_{\beta}$ is a cycle-vertex of $U_{2}$, and $U_{1}, U_{2}$ have the same $\big(s, 3^{k+3-\beta}, k\big)$-type. We now set $v_{j+1} = b_{i}$.
\par If there exist indices $1 \leq i_{1} < \cdots < i_{r} \leq j$ such that $x_{i_{1}}, \ldots, x_{i_{r}}$ form the $x$-cluster, up to round $j$, under $a_{i}$, then, by induction hypothesis \ref{cond 5}, $y_{i_{1}}, \ldots, y_{i_{r}}$ form the $y$-cluster, up to round $j$, under $b_{i}$. Moreover, $C = \Big\{\big(a_{i}, b_{i}\big), \big(x_{i_{1}}, y_{i_{1}}\big), \ldots, \big(x_{i_{r}}, y_{i_{r}}\big)\Big\}$ is winnable for $\Big\{A_{i}\big|_{3^{k+3-\beta}}, B_{i}\big|_{3^{k+3-\beta}}, k-r\Big\}$. We then set $y_{j+1}$ in $B_{i}$ to be a corresponding vertex to $x_{j+1}$ with respect to $\Big\{A_{i}\big|_{3^{k+3-\beta}}, B_{i}\big|_{3^{k+3-\beta}}, k-r, C\Big\}$. If the $x$-cluster under $a_{i}$ up to round $j$ is empty, we still can choose $y_{j+1}$ as a corresponding vertex to $x_{j+1}$ by Lemma~\ref{same_tree_type_DEHR},since $A_{i}$ and $B_{i}$ have the same $\big(3^{k+3-\beta}, k\big)$-type. 
\par In either case, by definition of corresponding vertices, we know that $C' = C \cup \big\{(x_{j+1}, y_{j+1})\big\}$ is winnable for $\Big\{A_{i}\big|_{3^{k+3-\beta}}, B_{i}\big|_{3^{k+3-\beta}}, k-r-1\Big\}$, thus satisfying \ref{cond 5} for round $j+1$. We also have immediate satisfiability of \ref{cond 4}. From \eqref{conclusion_rem_1} and the fact that $x_{j+1}$ is located shallow in $U_{1}$, we know that $\rho\big(y_{j+1}, b_{i}\big) = \rho\big(x_{j+1}, a_{i}\big) \leq 2 \cdot 3^{k+1-j}$, hence showing that $y_{j+1}$ is also located shallow in $U_{2}$. Our choices of $u_{j+1}$ and $v_{j+1}$ then validate \ref{cond 1} for round $j+1$. 
\par In order to verify \ref{cond 2}, it is enough to show that for all $1 \leq \ell \leq j$, $y_{\ell}$ and $y_{j+1}$ are far. For all $1 \leq t \leq r$, by \eqref{conclusion'_rem_1}, we conclude that $\rho\big(y_{j+1}, y_{i_{t}}\big) = \rho\big(x_{j+1}, x_{i_{t}}\big)$; hence $y_{j+1}, y_{i_{t}}$ are far from each other as $x_{j+1}, x_{i_{t}}$ are far from each other. Consider $\ell \in \{1, \ldots, j\} \setminus \{i_{1}, \ldots, i_{r}\}$ such that $u_{\ell}$ is a cycle-vertex of $U_{1}$. Then $u_{\ell} = a_{i'}$ for some $1 \leq i' \leq s$ with $i' \neq i$. By induction hypothesis \ref{cond 4}, $v_{\ell} = b_{i'}$. By \eqref{conclusion_rem_1} applied to both rounds $\ell$ and $j+1$, we have $\rho\big(x_{\ell}, a_{i'}\big) = \rho\big(y_{\ell}, b_{i'}\big)$ and $\rho\big(x_{j+1}, a_{i}\big) = \rho\big(y_{j+1}, b_{i}\big)$. We also know that $\rho\big(a_{i}, a_{i'}\big) = \rho\big(b_{i}, b_{i'}\big)$ as $U_{1}$ and $U_{2}$ have the same cycle length. Combining these, we get:
\begin{align}
\rho\big(x_{\ell}, x_{j+1}\big) &= \rho\big(x_{\ell}, a_{i'}\big) + \rho\big(a_{i'}, a_{i}\big) + \rho\big(a_{i}, x_{j+1}\big) \nonumber\\
&= \rho\big(y_{\ell}, b_{i'}\big) + \rho\big(b_{i'}, b_{i}\big) + \rho\big(b_{i}, y_{j+1}\big) \nonumber\\
&= \rho\big(y_{\ell}, y_{j+1}\big).
\end{align}
This again shows that $y_{\ell}, y_{j+1}$ are far since $x_{\ell}, x_{j+1}$ are far. Finally, consider $1 \leq \ell \leq j$ such that $u_{\ell}$ is not a cycle-vertex of $U_{1}$. If $y_{\ell}$ belongs to a component different from $U_{2}$, nothing left to prove. Assume that $y_{\ell} \in U_{2}$. If $y_{\ell}$ and $y_{j+1}$ were close, then using \eqref{close_cond} applied to this pair, \eqref{conclusion_rem_1} to $j+1$,and the fact that $x_{j+1}$ is located shallow in $U_{1}$, we get:
\begin{align}\label{verify_cond_2_far_deep}
\rho\big(y_{\ell}, b_{i}\big) &\leq \rho\big(y_{\ell}, y_{j+1}\big) + \rho\big(y_{j+1}, b_{i}\big) \nonumber\\
&\leq 2 \cdot 3^{k+1-j} + \rho\big(x_{j+1}, a_{i}\big) \nonumber\\
&\leq 2 \cdot 3^{k+1-j} + 2 \cdot 3^{k+1-j} < 2 \cdot 3^{k+2-\ell}, \quad \text{as } \ell \leq j.
\end{align}
As $b_{i}$ is a cycle-vertex of $U_{2}$, this implies that $y_{\ell}$ is located shallow in $U_{2}$. Induction hypothesis \ref{cond 1} tells us that $v_{\ell} = \an(y_{\ell})$ in $U_{2}$, and induction hypothesis \ref{cond 4} then tells us that $u_{\ell}$ must be a cycle-vertex of $U_{1}$, which contradicts the assumption we started with. This completes the verification of \ref{cond 2}.


\par The second possible scenario is that there exists no $1 \leq \ell \leq j$ such that $u_{\ell}$ is a cycle-vertex of $U_{1}$, i.e.\ $U_{1}$ was free up to round $j$. Suppose $U_{1} = \big(a_{1}, \ldots, a_{s}; A_{1}, \ldots, A_{s}\big)$, with $\an(x_{j+1}) = a_{i}$. We first set $u_{j+1} = a_{i}$. By Remark~\ref{rem_3}, we find a unicyclic component $U_{2} = \big(b_{1}, \ldots, b_{s}; B_{1}, \ldots, B_{s}\big)$ in $G_{2}$ such that $U_{2}$ has the same $\big(s, 3^{k+2-j}, k\big)$-type as $U_{1}$, and was free up to round $j$. By Lemma~\ref{same_tree_type_DEHR}, and the fact that $A_{i}, B_{i}$ have the same $\big(3^{k+2-j}, k\big)$-type, we choose $y_{j+1}$ to be a corresponding vertex to $x_{j+1}$ in $B_{i}$ with respect to $\Big\{A_{i}\big|_{3^{k+2-j}}, B_{i}\big|_{3^{k+2-j}}, k, (a_{i}, b_{i})\Big\}$. We also set $v_{j+1} = b_{i}$. 
\par By the fact that $x_{j+1}$ is located shallow in $U_{1}$, \eqref{deep_cond} and \eqref{conclusion_rem_1}, we conclude that $\rho\big(y_{j+1}, b_{i}\big) = \rho\big(x_{j+1}, a_{i}\big) \leq 2 \cdot 3^{k+1-j}$; hence $y_{j+1}$ is located shallow in $U_{2}$. Our choices of $u_{j+1}$ and $v_{j+1}$ then validate \ref{cond 1}. Note that $U_{1}$ being free up to round $j$, the smallest index $\beta$ such that $u_{\beta}$ is a cycle-vertex of $U_{1}$ is actually $j+1$, and we indeed have $U_{1}$ and $U_{2}$ of the same $\big(s, 3^{k+2-j}, k\big)$-type. This, along with our choices of $u_{j+1}$ and $v_{j+1}$ and the fact that $U_{1}$ and $U_{2}$ were both free up to round $j$, validate \ref{cond 4}. By \eqref{deep_cond}, we have $\rho\big(x_{j+1}, a_{i}\big) \leq 2 \cdot 3^{k+1-j} = 3^{k+2-j} - 3^{k+1-j}$, thus validating \ref{cond 6} for round $j+1$. By definition of corresponding vertices, we know that $C = \Big\{\big(a_{i}, b_{i}\big), \big(x_{j+1}, y_{j+1}\big)\Big\}$ is winnable for $\Big\{A_{i}\big|_{3^{k+2-j}}, B_{i}\big|_{3^{k+2-j}}, k-1\Big\}$, thus validating \ref{cond 5} for round $j+1$. 
\par To verify \ref{cond 2}, we once again just show that $y_{\ell}$ and $y_{j+1}$ are far for every $1 \leq \ell \leq j$. If not, then by derivations similar to that of \eqref{verify_cond_2_far_deep}, we have $\rho\big(y_{\ell}, b_{i}\big) < 2 \cdot 3^{k+2-\ell}$, hence showing that $y_{\ell}$ is located shallow in $U_{2}$. By induction hypothesis \ref{cond 1}, $v_{\ell}$ must then be a cycle-vertex of $U_{2}$, contradicting our choice of $U_{2}$ as free up to round $j$. This completes the verification of \ref{cond 2}. 

\subsection{Far move, not located shallow any short unicyclic component:}\label{far, shallow} Suppose there does not exist any $1 \leq \ell \leq j$ with $x_{\ell}$ and $x_{j+1}$ close, and $x_{j+1}$ is not located shallow in any short unicyclic component. We set $u_{j+1} = x_{j+1}$. Consider the tree $B\left(u_{j+1}, 3^{k+1-j}\right)$ rooted at $u_{j+1}$ up to depth $3^{k+1-j}$. Let the $\big(3^{k+1-j}, k\big)$-type of this tree be $\sigma$. By Theorem \ref{main_1}, each of $G_{1}$ and $G_{2}$ will contain at least $k$ tree components of type $\sigma$. Since less than $k$ rounds have been played so far, we find such a component in $G_{2}$ from which no vertex has been selected up to round $j$, and set its root to be both $v_{j+1}$ and $y_{j+1}$. 
\par Firstly, our choices of $u_{j+1}, v_{j+1}$ and $y_{j+1}$ are consistent with \ref{cond 3}. As $\rho(u_{j+1}, x_{j+1}) = 0$, hence \ref{cond 6} holds as well. \ref{cond 1} and \ref{cond 4} do not apply here. Since no $y_{\ell}$ belongs to the same component as $y_{j+1}$ for $1 \leq \ell \leq j$, hence $y_{\ell}, y_{j+1}$ are far (their distance is infinite). Hence \ref{cond 2} holds immediately. Finally, observe that $u_{j+1} = x_{j+1}$ and $v_{j+1} = y_{j+1}$ are the roots of the trees $B\left(u_{j+1}, 3^{k+1-j}\right)$ and $B\left(v_{j+1}, 3^{k+1-j}\right)$, and these two trees have the same $\big(3^{k+1-j}, k\big)$-type. By Lemma~\ref{same_tree_type_DEHR}, Duplicator wins $\DEHR\Big[B\left(u_{j+1}, 3^{k+1-j}\right), B\left(v_{j+1}, 3^{k+1-j}\right), k, (u_{j+1}, v_{j+1})\Big]$, which verifies \ref{cond 5}.

\par This brings us to the end of the analysis of Duplicator's response to all possible moves by Spoiler. The conditions \ref{cond 1} through \ref{cond 6} are stronger than \ref{EHR 1} and \ref{EHR 2}. Hence Duplicator wins the Ehrenfeucht game $\EHR[G_{1}, G_{2}, k]$.

\subsection{The final conclusion for Theorem~\ref{main_3}:} Note that the above proof lets us conclude that for a fixed positive integer $k$, if we specify that a graph $G$ satisfies properties \ref{1}, \ref{2} and $\mathcal{A}_{\vec{n}}$, for some $\vec{n} \in I_{k}$, then this completely describes FO properties of quantifier depth $\leq k$ that hold for $G$. Given an arbitrary FO property $A$ of quantifier depth at most $k$, we can determine without any uncertainty whether $A$ holds for $G$ or not. Hence, for every $\vec{n} \in I_{k}$, the theory $T + \mathcal{A}_{\vec{n}}$ is a $k$-complete theory. It is also immediate that for two distinct $\vec{n}$ and $\vec{m}$ in $I_{k}$, only one of $\mathcal{A}_{\vec{n}}$ and $\mathcal{A}_{\vec{m}}$ can hold for any graph, hence $T \models \neg\left(\mathcal{A}_{\vec{m}} \wedge \mathcal{A}_{\vec{n}}\right)$. Moreover, $I_{k}$ is exhaustive in the sense that $\bigcup_{\vec{n} \in I_{k}} \mathcal{A}_{\vec{n}}$ is the entire sample space. Thus any countable model that satisfies $T$ will satisfy $A_{\vec{n}}$ for precisely one $\vec{n}$ in $I_{k}$.

\par So finally, to conclude, from the definition given in Section~\ref{FO_theory_model}, that $\left\{\mathcal{A}_{\vec{n}}: \vec{n} \in I_{k}\right\}$ is a complete set of $k$-completions of $T$, it is enough to establish that the limit
\begin{equation}\label{lim_exists}
\lim_{n \rightarrow \infty} P\left[G(n, cn^{-1}) \models \mathcal{A}_{\vec{n}}\right]
\end{equation}
exists. We show this in Section~\ref{lim_probab_sec}. We additionally show that it is not possible to exclude any $\vec{n}$ from $I_{k}$ while constructing this comlete set of $k$-completions, i.e.\
\begin{equation}\label{limiting_probability}
\lim_{n \rightarrow \infty} P\left[G(n, cn^{-1}) \models \mathcal{A}_{\vec{n}}\right] > 0 \quad \text{for all } \vec{n} \in I_{k}.
\end{equation}

\section{Limiting probabilities of the $k$-completions}\label{lim_probab_sec}
We fix positive integers $M_{1}$ and $M_{2}$, and consider $I_{M_{1}, M_{2}}$ to be the set of all sequences $\vec{n} = \left(n_{\gamma}: \gamma \in \Gamma_{s, m, k}, 3 \leq s \leq M_{1}, 0 \leq m \leq M_{2}\right)$ that satisfy \eqref{consistency_cond_truncated}. We show, in subsequent subsections, that the following holds:
\begin{equation}\label{limiting_probability_generalized}
\lim_{n \rightarrow \infty} P\left[G(n, cn^{-1}) \models \mathcal{A}_{\vec{n}}\right] \text{ exists and is positive, for all } \vec{n} \in I_{M_{1}, M_{2}}.
\end{equation}
In Subsection~\ref{no_short_cycles}, we show that the probability of the event that there are no cycles of length $\leq M_{1}$ in $G\left(n, c n^{-1}\right)$ (i.e.\ when $\vec{n} = \vec{0}$) converges to a positive limit as $n \rightarrow \infty$. Using this fact, in Subsection~\ref{general_short_cycle_counts}, we show that \eqref{limiting_probability_generalized} holds for any general $\vec{n}$ in $I_{M_{1}, M_{2}}$. 

\subsection{When no short cycles are present}\label{no_short_cycles}
Consider the event $A$ that exists no cycle of length $i$ for all $3 \leq i \leq M_{1}$. Let $\mathcal{S}_{i}$ denote the set of all subsets of size $i$ of the vertex set $V$ of $G\left(n, cn^{-1}\right)$. For every $S \in \mathcal{S}_{i}$, let $\mathbf{1}_{S}$ be indicator for the event that the induced subgraph on $S$ is a cycle of length $i$. For $3 \leq i \leq M_{1}$ and $S \in \mathcal{S}_{i}$, we have $E[\mathbf{1}_{S}] = \left(\frac{c}{n}\right)^{i} = c^{i} n^{-i}.$ 
\par Consider $S \in \mathcal{S}_{i}$ and $T \in \mathcal{S}_{j}$, for some $i, j \in \{3, \ldots, M_{1}\}$. Let the number of common vertices between $S$ and $T$ be $\ell+1$, where $\ell + 1 \leq i \wedge j$, and $\ell \geq 1$. The number of edges required for both $\mathbf{1}_{S}$ and $\mathbf{1}_{T}$ to be true is at least $i+j-\ell$, whereas the total number of vertices in $S \cup T$ is $i+j-\ell-1$. Thus the cases where the cycles on $S$ and $T$ share exactly $\ell$ edges, have the dominant probabilities, given by $\Theta\left(\left(\frac{c}{n}\right)^{i+j-\ell}\right) = \Theta\left(n^{-i-j+\ell}\right).$
Now, there are ${n \choose \ell+1} \cdot {n-\ell \choose i-\ell-1} \cdot {n-i \choose j-\ell-1} = \Theta\left(n^{i+j-\ell-1}\right)$ ways of choosing $S$ and $T$. Hence 
\begin{equation}
\sum_{\substack{S, T \subseteq V\\|S| = i, |T| = j, |S \cap T| = \ell+1}} E[\mathbf{1}_{S} \mathbf{1}_{T}] = \Theta\left(n^{i+j-\ell-1}\right) \cdot \Theta\left(n^{-i-j+\ell}\right) = \Theta\left(n^{-1}\right).
\end{equation}
Summing over $i, j \leq \{3, \ldots, M_{1}\}$ and $\ell + 1 \leq i \wedge j$, and noting that the sum has finitely many terms, we get 
\begin{equation}\label{Janson_interaction}
\sum_{i, j \in \{3, \ldots, M_{1}\}} \sum_{\ell + 1 \leq \min\{i, j\}} \sum_{\substack{S, T \subseteq V\\|S| = i, |T| = j, |S \cap T| = \ell+1}} E[\mathbf{1}_{S} \mathbf{1}_{T}] = O\left(n^{-1}\right).
\end{equation}
A direct application of Janson's inequality gives us
\begin{align}\label{eq_final_no_short_cycle}
\lim_{n \rightarrow \infty} P\left[\text{no cycle of length} \leq M_{1}\right] &= \lim_{n \rightarrow \infty} \prod_{i=3}^{M_{1}} \left\{1 - c^{i} n^{-i}\right\}^{{n \choose i}} = \prod_{i=3}^{M_{1}} \exp\left\{-\frac{c^{i}}{i!}\right\}.
\end{align}
Notice that this not only establishes \eqref{limiting_probability_generalized}, but at the same time establishes that the limit exists, as required in \eqref{lim_exists}, when $\vec{n} = \vec{0}$. 

\subsection{The general case:}\label{general_short_cycle_counts}
There are two key ideas of the proof in this subsection is the following. Fix $\vec{n}$ in $I_{M_{1}, M_{2}}$. 
\begin{defn}\label{picture}
Given a graph $G$, we define the $(M_{1}, M_{2})$-picture of $G$, denoted $\pic(G) = \pic_{M_{1}, M_{2}}(G)$ (we drop the subscripts whenever their values are obvious from the context), gives an \emph{exact} description of all the short unicyclic components up to depth $M_{2}$, i.e.\ for every unicyclic component $U$ whose cycle length is $\leq M_{1}$, we know the exact structure of $U|_{M_{2}}$. In particular, we do \emph{not} consider the cutoff $k$ when we describe $\pic(G)$. 
\end{defn}
To give an example, suppose the graph has a triangle with two vertices childless and one having exactly $k+1$ children that are all childless. For $M_{2} \geq 2$, the description of the triangle in $\pic(G)$ should not simply state that two of its vertices are childless and one has at least $k$ children that are all childless, but rather state the exact counts. Let us denote by $\mathcal{P} = \mathcal{P}_{M_{1}, M_{2}}$ the set of all possible $(M_{1}, M_{2})$-pictures . 

\begin{defn}\label{picture_compatible}
Given $\vec{n} \in I_{M_{1}, M_{2}}$ and a graph $G$, we call $\pic_{M_{1}, M_{2}}(G)$ \emph{compatible} with $\vec{n}$ if $G \models \mathcal{A}_{\vec{n}}$ (or equivalently, we can say that $\pic(G) \models \mathcal{A}_{\vec{n}}$).
\end{defn}
Let $\mathcal{P}(\vec{n}) = \mathcal{P}_{M_{1}, M_{2}}(\vec{n})$ denote the set of all pictures that are compatible with $\vec{n}$. Note that for many $\vec{n}$, the set $\mathcal{P}(\vec{n})$ is infinite. 

\par W fix a picture $H$ from $\mathcal{P}_{M_{1}, M_{2}}$, and estimate below the probability that $\pic\big(G\left(n, c n^{-1}\right)\big) = H$. Let $M_{H}$ denote the number of vertices in the picture $H$ (crucially, $M_{H}$ does not depend on $n$ for a fixed $H$). Since $H$ consists of unicyclic components, hence the number of edges in $H$ will also be $M_{H}$. We choose a subset $S$ of $M_{H}$ vertices from $G(n, c n^{-1})$ in ${n \choose M_{H}} \sim n^{M_{H}}/M_{H}!$ ways. The probability that the induced subgraph on $S$ will be the picture $H$ is given by 
\begin{equation}
C_{H} \cdot p^{M_{H}} (1-p)^{{M_{H} \choose 2} - M_{H}} = C_{H} \cdot \frac{c^{M_{H}}}{n^{M_{H}}} \cdot \left\{1 - \frac{c}{n}\right\}^{{M_{H} \choose 2} - M_{H}} \sim C_{H} \cdot \frac{c^{M_{H}}}{n^{M_{H}}}.
\end{equation}
The constant $C_{H}$ is derived from the number of automorphisms of $H$, and is independent of $n$. The only edges that can exist between the subgraph on $S$ and that on $V \setminus S$ must be between $\mathcal{L}$ and $V \setminus S$, where $\mathcal{L}$ is the set of leaves in $H$ at depth $M_{2}$. The cardinality of $\mathcal{L}$ depends only on $H$ and not on $n$, and let this number be $L_{H}$. The probability that there is no edge between $S \setminus \mathcal{L}$ and $V \setminus S$ is given by
\begin{equation}
\left(1 - \frac{c}{n}\right)^{\left(M_{H} - L_{H}\right) \left(n - M_{H}\right)} \sim e^{-c\left(M_{H} - L_{H}\right)}.
\end{equation}
Finally, the subgraph induced on $\mathcal{L} \cup \left(V \setminus S\right)$ is distributed the same as $G\left(n - M_{H} + L_{H}, c n^{-1}\right)$, and it must not contain any short unicyclic components. By repeating the computations as in Subsection~\ref{no_short_cycles} and noting that $n \sim n - M_{H} + L_{H}$, we get:
\begin{multline} 
\pr\left[\text{subgraph induced on } (V \setminus S) \cup \mathcal{L} \text{ has no short cycles}\right] \sim \prod_{i=3}^{M_{1}} \exp\left\{-\frac{c^{i}}{i!}\right\}.
\end{multline}
Hence finally, we have:
\begin{align}\label{limit_each_H}
&\pr\left[\pic\big(G\left(n, c n^{-1}\right)\big) = H\right] \nonumber\\
\sim & \frac{n^{M_{H}}}{M_{H}!} \cdot C_{H} \cdot \frac{c^{M_{H}}}{n^{M_{H}}} \cdot e^{-c\left(M_{H} - L_{H}\right)} \cdot \prod_{i=3}^{M_{1}} \exp\left\{-\frac{c^{i}}{i!}\right\} \nonumber\\
=& \frac{C_{H} \cdot c^{M_{H}}}{M_{H}!} \cdot e^{-c\left(M_{H} - L_{H}\right)} \cdot \prod_{i=3}^{M_{1}} \exp\left\{-\frac{c^{i}}{i!}\right\},
\end{align}
thus showing that the limit of $\pr\left[\pic\big(G\left(n, c n^{-1}\right)\big) = H\right]$ exists as $n \rightarrow \infty$ and it is positive. 

\par So far, we have only considered the limit for every fixed picture $H$. Our goal is to show that for every $\vec{n} \in I_{M_{1}, M_{2}}$, the limit of $\pr\left[G(n, c n^{-1}) \models \mathcal{A}_{\vec{n}}\right]$, or equivalently, $\pr\left[\pic\big(G\left(n, c n^{-1}\right)\big) \in \mathcal{P}(\vec{n})\right]$, exists as $n \rightarrow \infty$. The crucial observation is that an interchange of limit and summation over all $H \in \mathcal{P}(\vec{n})$ is not allowed when $\mathcal{P}(\vec{n})$ is infinite.

\par To this end, for any $N \in \mathbb{N}$, we split $\mathcal{P}(\vec{n})$ into two parts: the set $\mathcal{P}_{N}(\vec{n})$ of pictures $H$ with $M_{H} \leq N$, and the set $\mathcal{P}_{> N}(\vec{n})$ of pictures $H$ with $M_{H} > N$. Clearly, $\mathcal{P}_{N}(\vec{n})$ is a finite set, and hence for every $N$, from \eqref{limit_each_H}, we have
\begin{equation}\label{limit_pics_upto_N}
\lim_{n \rightarrow \infty} \pr\left[\pic\big(G\left(n, c n^{-1}\right)\big) \in \mathcal{P}_{N}(\vec{n})\right] = \sum_{H \in \mathcal{P}_{N}(\vec{n})} \frac{C_{H} \cdot c^{M_{H}}}{M_{H}!} \cdot e^{-c\left(M_{H} - L_{H}\right)} \cdot \prod_{i=3}^{M_{1}} \exp\left\{-\frac{c^{i}}{i!}\right\}.
\end{equation}

\par There are essentially two ways (or a combination of both) that one may end up with a picture that has too many vertices. These are as follows:
\begin{enumerate}
\item either there are many short unicyclic components;
\item or there exists at least one vertex inside the picture that has a very high degree.
\end{enumerate} 

\par Consider $\many_{W}$ to be the event that the total number of short unicyclic components is bigger than $W M_{1}$, for any given positive integer $W$. By pigeon-hole principle, there must exist some $3 \leq s \leq M_{1}$ such that the number of unicyclic components that have cycle length $s$ exceeds $W$. Let $\many_{W, s}$ denote the event that there are at least $W$ many cycles of length $s$ (and the components containing these cycles are going to be disjoint from each other, since from \ref{1} of Theorem \ref{main_1}, we know that almost surely there is no bicyclic component). We can choose a subset of $s$ vertices in ${n \choose s} \sim n^{s}/s!$ many ways, and the probability that the induced subgraph on this subset is a cycle of length $s$ is given by $\frac{(s-1)!}{2} \cdot \frac{c^{s}}{n^{s}} \cdot \left\{1 - \frac{c}{n}\right\}^{{s \choose 2} - s} \sim \frac{(s-1)!}{2} \cdot \frac{c^{s}}{n^{s}}.$ Hence the expected number of cycles of length $s$ in $G(n, c n^{-1})$ is asymptotically $\sim \frac{c^{s}}{2s} = O(1)$. Using Markov's inequality, we then get:
\begin{align}
&\pr\left[\#\text{ cycles of length } s \text{ in } G(n, c n^{-1}) > W\right] \nonumber\\
\leq & \E\left[\#\text{ cycles of length } s \text{ in } G(n, c n^{-1})\right] W^{-1} \sim \frac{c^{s}}{2s} \cdot W^{-1}, \quad \text{as } n \rightarrow \infty,
\end{align}
showing that the limit goes to $0$ as $W \rightarrow \infty$.
\par Next, consider the event $\bu_{W}$ that there exists some node $v$ within the $(M_{1}, M_{2})$-picture of $G(n, c n^{-1})$ such that $v$ has more than $W$ many neighbours. By definition, this means that there exists a short cycle from which $v$ is at a distance $\leq M_{2}$. Define $\bu_{s, d, W}$ to be the event that there exists a cycle of length $s$, and a node $v$ at distance $d$ from the cycle, such that $v$ has degree more than $W$. 

\par Call a graph an \emph{$(s, d, W)$-key} if it is a connected graph that consists of precisely the following: a cycle of length $s$, a node $v$ not on the cycle, a path of length $d$ between $v$ and a vertex $u$ on the cycle, and $W$ vertices not on the path nor on the cycle that are adjacent to $v$. For $\bu_{s, d, W}$ to hold, at least one $(s, d, W)$-key must be present in $G(n, c n^{-1})$. 

\par We estimate the expected number of $(s, d, W)$-keys in $G(n, c n^{-1})$ here. First, we have to choose $s$ many vertices to form the cycle, which we can do in ${n \choose s} \sim n^{s}/s!$ many ways. They can be arranged to form a cycle in $(s-1)!/2$ many ways. We can choose the vertex $u$ on the cycle in ${s \choose 1} = s$ many ways. We can choose the $d$ many vertices (including $v$) on the path between $u$ and $v$ in ${n-s \choose d} \sim n^{d}/d!$ many ways, and arrange them in $d!$ many ways. Finally, we can choose the $W$ remaining neighbours of $v$ in ${n-s-d \choose W} \sim n^{W}/W!$ many ways. Next, we note that there are $s$ many edges in the cycle, $d$ many edges along the path, and $W$ many edges between $v$ and its $W$ neighbours (not on the path between $v$ to $u$) -- hence a total of $s+d+W$ many edges.

\par Thus the expected number of $(s, d, W)$-keys in $G(n, c n^{-1})$ is
\begin{align} 
&\frac{n^{s}}{s!} \cdot \frac{(s-1)!}{2} \cdot s \cdot \frac{n^{d}}{d!} \cdot d! \cdot \frac{n^{W}}{W!} \cdot \left(\frac{c}{n}\right)^{s + d + W} = \frac{c^{s+d+W}}{2 W!}.
\end{align}
Consequently, again by an application of Markov's inequality:
\begin{align}
\pr\left[\bu_{W} \text{ holds for } G(n, c n^{-1})\right] &= \pr\left[\bigcup_{\substack{3 \leq s \leq M_{1}\\0 \leq d \leq M_{2}}} \left\{\bu_{s, d, W} \text{ holds for } G(n, c n^{-1})\right\}\right] \nonumber\\
&\sim \sum_{\substack{3 \leq s \leq M_{1}\\0 \leq d \leq M_{2}}} \frac{c^{s+d+W}}{2 W!}, \quad \text{as } n \rightarrow \infty,
\end{align}
and this limit of the probability clearly goes to $0$ as $W \rightarrow \infty$. 

\par These estimates show us that both $\many_{W}$ and $\bu_{W}$ occur with $o(1)$ probability as $n \rightarrow \infty$. Given an arbitrarily small but fixed $\epsilon$, there exists $W_{0} \in \mathbb{N}$ such that, for all $W \geq W_{0}$,
\begin{equation}\label{ep_1}
\lim_{n \rightarrow \infty}\pr\left[\many_{W} \text{ holds for } G(n, c n^{-1})\right] \leq \frac{\epsilon}{2},
\end{equation}
and
\begin{equation}\label{ep_2}
\lim_{n \rightarrow \infty}\pr\left[\bu_{W} \text{ holds for } G(n, c n^{-1})\right] \leq \frac{\epsilon}{2}.
\end{equation}
We find $N_{0} \in \mathbb{N}$ such that if the $(M_{1}, M_{2})$-picture of a graph $G$ contains more than $N_{0}$ many vertices, then either $\many_{W_{0}}$ or $\bu_{W_{0}}$ or both must hold for $G$. By \eqref{ep_1} and \eqref{ep_2},
\begin{align}\label{limsup_>N}
& \limsup_{n \rightarrow \infty} \pr\left[\pic\big(G(n, c n^{-1})\big) \in \mathcal{P}_{> N_{0}}(\vec{n})\right] \leq \epsilon.
\end{align}
Since $\epsilon$ is arbitrary, this lets us conclude that the limit of $\pr\left[\pic\big(G\left(n, c n^{-1}\right)\big) \in \mathcal{P}(\vec{n})\right]$ exists as $n \rightarrow \infty$. Moreover, from \eqref{limit_each_H}, we conclude that this limit is positive for each $\vec{n} \in I_{M_{1}, M_{2}}$.

This completes the proof that indeed $\left\{\mathcal{A}_{\vec{n}}: \vec{n} \in I_{k}\right\}$ forms a complete set of $k$-completions for $T$.

\section{Acknowledgements}
The author humbly thanks her doctoral advisor Prof.\ Joel Spencer for suggesting the problem addressed in this paper to her, and for sharing his extremely helpful thoughts with her.

\end{document}